\documentclass[reqno,final]{amsart}

%
\usepackage{caption}
\usepackage{subcaption}
\usepackage{amsfonts,latexsym,graphicx,amsmath,amssymb,bbm,enumitem,url}
\usepackage{algorithm}
\usepackage{algpseudocode}

\makeindex

%
\usepackage[color,notref,notcite]{showkeys}
\allowdisplaybreaks

\definecolor{labelkey}{rgb}{0.6,0,1}

\usepackage[normalem]{ulem}
\normalem
\newcounter{corr}
\definecolor{violet}{rgb}{0.580,0.,0.827}
\newcommand{\corr}[3]{\typeout{Warning : a correction remains in page
\thepage}
				\stepcounter{corr}        
				{\color{blue}\ifmmode\text{\,\sout{\ensuremath{#1}}\,}\else\sout{#1}\fi}
        {\color{red}#2}
        {\color{violet} \fbox{\thecorr}#3}}



\newcounter{cst}
\catcode`\@=11
\def\ctel#1{C_{\refstepcounter{cst}\@bsphack
\protected@write\@auxout{}%
           {\string\newlabel{#1}{{\thecst}{\thepage}}}\thecst}}
\catcode`\@=12


\newcounter{cexp}
\catcode`\@=11
\def\terml#1{T_{\refstepcounter{cexp}\@bsphack
\protected@write\@auxout{}%
           {\string\newlabel{#1}{{\thecexp}{\thepage}}}\thecexp}}
\catcode`\@=12



\newcommand{\eop}{{\unskip\nobreak\hfil\penalty50
           \hskip2em\hbox{}\nobreak\hfil\mbox{\rule{1ex}{1ex} \qquad}
   \parfillskip=0pt
   \finalhyphendemerits=0\par\medskip}}
\renewenvironment{proof}[1][]{\noindent {\bf Proof#1. } }{\eop}


\newtheorem{theorem}{Theorem}[section]
\newtheorem{remark}[theorem]{Remark}

\newtheorem{lemma}[theorem]{Lemma} 
\newtheorem{definition}[theorem]{Definition}


%
\definecolor{shadecolor}{gray}{0.92}
\definecolor{TFFrameColor}{gray}{0.92}
\definecolor{TFTitleColor}{rgb}{0,0,0}

  




\newcommand{\ba}{\begin{array}{llll}   }
\newcommand{\bac}{\begin{array}{c}}
\newcommand{\bari}{\begin{array}{r}}
\newcommand{\ea}{\end{array}}

\newcommand{\ban}{\begin{array}{llll}}
\newcommand{\ean}{\end{array}}

\newcommand{\be}{\begin{equation}}
\newcommand{\ee}{\end{equation}}

\newcommand{\beqsys }{\beqtab \left \{ \begin{array}{l}}
\newcommand{\eeqsys }{\end{array} \right . \eeqtab }

\newcommand{\benum}{\begin{enumerate}}
\newcommand{\eenum}{\end{enumerate}}

\newcommand{\beqtab}{\begin{eqnarray}} 
\newcommand{\eeqtab}{\end{eqnarray}}






\newcommand{\Vel}{\mathbf{V}}


\newcommand{\disc}{{\mathcal D}}

\newcommand{\rd}{\mathrm{d}}

\newcommand{\edges}{{\mathcal E}}              


\newcommand{\eh}{\mathrm{h}}
\newcommand{\ei}{\mathrm{i}}

\newcommand{\flux}{\mathbf{\Gamma}}

\newcommand{\hV}{V_{x,j+1/2}}



\newcommand{\mesh}{{\mathcal M}}



\newcommand{\norm}[2]{\| #1 \|_{#2}}


\newcommand{\sP}{s_{\mathrm{P}}}

\newcommand{\phy}{\varphi}

\newcommand{\R}{\mathbb R}

\newcommand{\V}{\mathbf{V}}




\renewcommand{\norm}[2]{\left\Vert#1\right\Vert_{#2}}



\usepackage{xparse}
\DeclareDocumentCommand{\RPiD}{ O{\disc} O{,0} }{\Pi_{#1}(X_{#1#2})}



\def\Fdof#1{{\bm{\mathcal{F}}(#1,\R)}}
\makeatletter
\def\Fdof{\@ifnextchar[{\@with}{\@without}}
\def\@with[#1]#2{{\bm{\mathcal{F}}(#2;#1)}}
\def\@without#1{{\bm{\mathcal{F}}(#1,\R)}}
\makeatother


\def\RT0{\mathbb{RT}_0}

\newcommand{\sCoord}{\eta}
\definecolor{labelkey}{rgb}{0.6,0,1}
\makeatletter
\newcommand*\bigcdot{\mathpalette\bigcdot@{.5}}
\newcommand*\bigcdot@[2]{\mathbin{\vcenter{\hbox{\scalebox{#2}{$\m@th#1\bullet$}}}}}

\def\BState{\State\hskip-\ALG@thistlm}
\makeatother

\begin{document}

	\title[]{Complete flux scheme for variable velocity fields: coupling between the advection-diffusion equation and the Poisson equation for the velocity field}
	
		\author{Hanz Martin Cheng}
		\address{School of Engineering Science, Lappeenranta--Lahti University of Technology, P.O. Box 20, 53851 Lappeenranta, Finland}
	\email{hanz.cheng@lut.fi} 
				\author{Jan ten Thije Boonkkamp}
		\address{Department of Mathematics and Computer Science, Eindhoven University of Technology, P.O. Box 513, 5600 MB Eindhoven, The Netherlands.}
			\email{j.h.m.tenthijeboonkkamp@tue.nl}
	
	\date{\today}
	
	%
	%
		\keywords{advection-diffusion equation, Poisson equation, finite volume method, second-order convergence, complete flux scheme}
	%
	\maketitle
		\begin{abstract}
		In this work, we consider an advection-diffusion equation, coupled to a Poisson equation for the velocity field. This type of coupling is typically encountered in models arising from plasma physics or porous media flow. The aim of this work is to build upon the complete flux scheme (an improvement over the Scharfetter-Gummel scheme by considering the contribution of the source term), so that its second-order convergence, which is uniform in P\'eclet numbers, carries over to these models. This is done by considering a piecewise linear approximation of the velocity field, which is then used for defining upwind-adjusted P\'eclet numbers. 
		\end{abstract}
\section{Introduction}
We consider an advection-diffusion equation, coupled to a Poisson equation for the velocity field. This type of coupled system is encountered in models used for physical applications, such as the coupling of Darcy's law with the conservation law for the concentration of an injected solvent for the miscible flow model in porous media  \cite{CS17-incompressible-2phase-flow,E83-Mathematics-Reservoir,IHMMA16-advanced-petroleum-reservoir,P77-Reservoir-Simulation}, and the coupling of the Poisson equation to the continuity equations for electron-ion densities in fluid models for plasma physics \cite{C05-fluid_ion,H07-model,Hetal02-thruster,K03-fluid_ion,P94-model}. For these models, the advection-diffusion equations are time-dependent. In this work,  we focus on a stationary coupled model, where we drop the dependence on the time variable. The aim of this work is to develop a fully second-order numerical scheme which is uniform in P\'eclet numbers for the stationary coupled system of equations. This will serve as a first step towards the development of an efficient and accurate numerical scheme for the time-dependent coupled system.

In this paper, we consider asymptotic-preserving finite volume type schemes. That is, we require the scheme to remain consistent with the inviscid (asymptotic) limit when the diffusion parameter vanishes. A lot of work has been done in studying these types of schemes and their extension to more complex problems, see e.g. \cite{BMP89-2Dexponential,R94-Ilin,RST08-robust_num,SS99-FE_splines}, and more recently \cite{BG18-truly2D} among others. In this paper, we focus on the complete flux scheme, the main idea of which involves extending the classical Scharfetter-Gummel \cite{SG69} or Il'in \cite{Ili69-Ilin}  scheme so that the contribution of the source term is also taken into account; see e.g., \cite{FL17-FVCF-unstructured-grids,AB11-FVCF}. One of the aims of this paper is to build upon the work in \cite{FL17-FVCF-unstructured-grids}, so that we can deal with nonconstant velocity fields. This is done by considering piecewise linear approximations of the velocity field, and by introducing \emph{upwind-adjusted P\'eclet numbers}. Aside from being able to take care of nonconstant velocity fields, we demonstrate that the proposed numerical scheme is able to handle discrete velocity fields which arise from solving a Poisson equation numerically.  

The paper is structured as follows: In Section \ref{sec:Model}, we start by presenting the model, consisting of an advection-diffusion equation coupled to a Poisson equation. Following this, we discuss in Section \ref{sec:CF_1D} the complete flux scheme in one dimension, where we start by presenting the integral representation of the flux in Section \ref{sec:CF_integral_rep}. We then explore in Section \ref{sec:pwc} the complete flux scheme with a piecewise constant approximation of the velocity field. The main novelty of this paper is then presented in Section \ref{sec:pwl}, where we consider a piecewise linear approximation of the velocity field, and construct upwind-adjusted P\'eclet numbers, which are used for the modified homogeneous and inhomogeneous fluxes. Section \ref{sec:CF_coupled} then presents the application of the complete flux scheme to the coupled system of equations. In particular, sections \ref{sec:pwc_coupled}--\ref{sec:pwl_coupled} discuss the relation between the discrete fluxes obtained in sections \ref{sec:pwc}--\ref{sec:pwl} to the discrete solutions of the Poisson equation. Section \ref{sec:CF_2D} then discusses how the complete flux scheme can be applied in two dimensions. Numerical tests are then presented in Section \ref{sec:NumTests} to demonstrate the second-order accuracy of the scheme, even for extreme tests for which the Poisson equation has a very steep source term. Finally, we provide a summary in Section \ref{sec:Conc}, together with possible directions to explore for future work.

\section{Model}\label{sec:Model}
Consider an advection-diffusion equation on an open bounded domain $\Omega \subset \mathbb{R}^d$ in dimension $d$, 
\begin{subequations}
\begin{equation}\label{eq:adv-diff}
\nabla \bigcdot \big(\mu c\mathbf{V}-D\nabla c) = s, \quad x\in\Omega,
\end{equation}
coupled to a Poisson equation
\begin{equation}\label{eq:Poisson}
\nabla \bigcdot (-\nabla \phy) = \sP, \quad x\in\Omega,
\end{equation}
\end{subequations}
with unknowns $c$ and $\phy$. Here, $s, \sP$ are the source terms of the advection-diffusion equation and the Poisson equation, respectively. The parameters $D>0, \mu \in \mathbb{R}$ are constants which represent the diffusion and mobility coefficients, respectively. The velocity field is defined by the relation $\Vel=-\nabla \phy$. This type of coupling is typically encountered in physical applications, such as the coupling between the electric field and the electron-ion densities in plasma physics, or between the Darcy velocity and solvent concentration for porous media flow. We note here that for simplicity, we consider $D$ to be a scalar, however, the scheme discussed in this paper can also be applied when $D$ is an anisotropic diffusion tensor (see e.g. \cite{CT20-complete_flux}).  In this paper, we use finite volume schemes for discretising the coupled system of equations. In particular, we consider the complete flux scheme and how it can be effectively applied onto coupled systems of equations. To start off, we consider the complete flux scheme in one dimension.
\section{Complete flux scheme in one dimension}\label{sec:CF_1D}
\subsection{Integral representation of the flux} \label{sec:CF_integral_rep}
Before presenting the numerical scheme, we give an integral representation of the flux. We start by writing the system \eqref{eq:adv-diff}--\eqref{eq:Poisson} in one dimension, 
\begin{subequations}
\begin{equation}\label{eq:adv-diff_1D}
\frac{\rd}{\rd x}\bigg(\mu c V-D\frac{\rd c}{\rd x}\bigg) = s, \quad  x\in\Omega,
\end{equation}
\begin{equation}\label{eq:Poisson_1D}
-\frac{\rd^2\phy}{\rd x^2} = \sP, \quad x\in\Omega.
\end{equation}
\end{subequations}
Considering $\Omega = (0,L)$, we now form a partition $0=x_1<x_2<\dots<x_{N+1}=L$ of $\Omega$. We then define control volumes $K_j:=(x_{j-1/2},x_{j+1/2}), j=2,\dots, N$, where $x_{j+1/2}=(x_j+x_{j+1})/2$ is the midpoint of the interval $(x_{j},x_{j+1})$. Key to the definition of finite volume schemes is the computation of the flux
\begin{equation}\label{def:flux}
f= \mu cV - D\frac{\rd c}{\rd x}
\end{equation} 
at the interfaces $x_{j-1/2}$ and $x_{j+1/2}$ of $K_j$. For this work, we use the complete flux scheme. We start by describing the complete flux scheme for \eqref{eq:adv-diff_1D} with a given velocity field $V$, and relate it to a velocity reconstructed from the discrete solution of \eqref{eq:Poisson_1D} afterwards. Integrating \eqref{eq:adv-diff_1D} over each control volume $K_j$, the finite volume scheme then requires us to find $c$ such that for $j=2,\dots, N$,
\begin{equation}\label{eq:FV_scheme}
f_{j+1/2}-f_{j-1/2} = \int_{x_{j-1/2}}^{x_{j+1/2}} s \,\mathrm{d}x,
\end{equation} 
where $f_{j+1/2}$ and $f_{j-1/2}$ are the values of the flux $f$ at $x_{j+1/2}$ and $x_{j-1/2}$, respectively. For the complete flux scheme, the fluxes are computed by solving, for each sub-interval $(x_j,x_{j+1})$, the boundary value problem
\begin{equation}\nonumber
\begin{aligned}
 &\frac{\rd f}{\rd x} = \frac{\rd}{\rd x}\bigg(\mu c V-D\frac{\rd c}{\rd x}\bigg) = s \quad \mathrm{on} \quad (x_j,x_{j+1}),\\
&c(x_j) = c_j, \quad c(x_{j+1})=c_{j+1}.
\end{aligned}
\end{equation}
Denoting $\Delta x = x_{j+1}-x_j$, we introduce the scaled coordinate 
\[\sCoord= \frac{x-x_j}{\Delta x}.
\] 
Here, $\sCoord\in[0,1]$, and we may rewrite the boundary value problem as
\begin{subequations}

\begin{align}
&f'=\bigg(\mu c V-\frac{D}{\Delta x}c'\bigg)' = s\Delta x \quad \mathrm{on} \quad (0,1), \label{eq:BVPscaled}\\
&c(0) = c_j, \quad c(1)=c_{j+1}, \label{eq:BC_BVP}
\end{align}
\end{subequations}
where $'$ denotes differentiation with respect to $\sCoord$. We note here that even though we choose a uniform mesh with $\Delta x=x_{j+1}-x_j$ for $j=1,\dots,N$, the method presented below can easily be adapted to non-uniform meshes with $\Delta x_j:= x_{j+1}-x_j$, $j=1,\dots,N$. Defining the local P\'eclet function 
\[
\Lambda(\eta) = \Delta x \int_{1/2}^{\eta} \frac{\mu}{D}V  \,\mathrm{d}\zeta,
\]
 we can also rewrite the flux \eqref{def:flux} for the advection-diffusion equation in the scaled coordinate
\begin{equation}\nonumber
\begin{aligned}
f(\sCoord)&=\mu c V-\frac{D}{\Delta x}c'\\
&= -\frac{D}{\Delta x}\bigg(c'-\frac{\mu}{D} cV \Delta x  \bigg)\\
&= -\frac{D}{\Delta x}\big(c\,e^{-\Lambda(\sCoord)}\big)'e^{\Lambda(\sCoord)}.
\end{aligned}
\end{equation} 
Taking then the integral of $f'=s\Delta x$ from $1/2$ to $\eta\in(0,1)$, using \eqref{eq:BVPscaled} and denoting $f_{j+1/2} = f(1/2)$ gives us
\begin{equation}\nonumber
\begin{aligned}
f(\sCoord)-f_{j+1/2} &= \hat{s}(\sCoord),\\
\hat{s}(\sCoord) &= \Delta x \int_{1/2}^\sCoord s \,\mathrm{d}\zeta,
\end{aligned} 
\end{equation}
or equivalently,
\begin{equation} \label{eq:flux_exact_diff}
\big(ce^{-\Lambda(\sCoord)}\big)' = -\frac{e^{-\Lambda(\sCoord)}}{D} \Delta x (f_{j+1/2}+\hat{s}(\sCoord)).
\end{equation}
For finite volume methods, it is common to approximate the source term with piecewise constants, i.e., for $j=2,\dots,N$,
\begin{equation}\nonumber
s = s_j, \quad x\in K_j,
\end{equation}
which gives us 
\begin{equation} \nonumber
\hat{s}(\sCoord)=\begin{cases}
s_j\Delta x(\sCoord-\frac12) &\quad \sCoord\in(0,1/2), \\
s_{j+1}\Delta x(\sCoord-\frac{1}{2}) & \quad \sCoord \in(1/2,1).
\end{cases}
\end{equation}
Integrating \eqref{eq:flux_exact_diff} from $0$ to $1$, applying the boundary conditions \eqref{eq:BC_BVP}, and solving for $f_{j+1/2}$ then leads to 
\begin{subequations}
\begin{align}
f_{j+1/2}&= f_{j+1/2}^\eh + f_{j+1/2}^\ei, \label{eq:flux_exact}
\end{align}
where
\begin{align}
f_{j+1/2}^\eh &= -\frac{D}{\Delta x}\frac{c_{j+1}e^{-\Lambda(1)}-c_{j}e^{-\Lambda(0)}}{\int_{0}^{1}e^{-\Lambda(\sCoord)} \mathrm{d}\sCoord},\label{eq:HF_exact}\\
f_{j+1/2}^\ei &= -\Delta x \frac{s_j \int_{0}^{1/2} (\sCoord-1/2)e^{-\Lambda(\sCoord)} \mathrm{d}\sCoord+s_{j+1}  \int_{1/2}^{1} (\sCoord-1/2)e^{-\Lambda(\sCoord)} \mathrm{d}\sCoord}{\int_{0}^{1}e^{-\Lambda(\sCoord)} \mathrm{d}\sCoord}, \label{eq:IF_exact}
\end{align}
\end{subequations}
are the homogeneous and inhomogeneous components of the flux, respectively.

We now note that there are several integrals that need to be computed. Firstly, we need to determine the values $\Lambda(0)$ and $\Lambda(1)$. Secondly, the integral $\int_{0}^{1}e^{-\Lambda(\sCoord)} \mathrm{d}\sCoord$, and finally the integrals $\int_{0}^{1/2}(\sCoord-1/2)e^{-\Lambda(\sCoord)} \mathrm{d}\sCoord$ and $\int_{1/2}^{1}(\sCoord-1/2)e^{-\Lambda(\sCoord)} \mathrm{d}\sCoord$. Since $V$ is nonconstant, these values cannot be computed exactly, and approximations have to be made. We discuss different choices for the approximations of these quantities. 
\subsection{Piecewise constant approximation of the velocity}\label{sec:pwc}
In this section, we consider a piecewise constant approximation of $V$. That is, on each interval $(x_j,x_{j+1})$, we approximate $V$ by its value $V_{j+1/2}$ at the midpoint $x_{j+1/2}$. In this case, the homogeneous and inhomogeneous fluxes \eqref{eq:HF_exact}--\eqref{eq:IF_exact} can be evaluated exactly, and we find that 
\begin{subequations}\label{eq:fluxes_std}
	\begin{align}
	f_{j+1/2}^\eh &= -\frac{D}{\Delta x}\big(B(\mathrm{Pe}_{j+1/2})c_{j+1}-B(-\mathrm{Pe}_{j+1/2})c_j\big),\label{eq:HF_std}\\
	f_{j+1/2}^\ei &= -\Delta x( W(\mathrm{Pe}_{j+1/2})s_{j+1} -W(-\mathrm{Pe}_{j+1/2})s_j ), \label{eq:IF_std}
	\end{align}
\end{subequations}
where
\begin{equation}\label{eq:Pec_std}
\mathrm{Pe}_{j+1/2} = \frac{\mu}{D}V_{j+1/2} \Delta x 
\end{equation}
is the P\'eclet number at the interface $x_{j+1/2}$, 
\begin{equation}\label{eq:Bern}
B(z) = \frac{z}{e^z-1}
\end{equation}
is the Bernoulli function, and 
\begin{equation}\label{eq:W}
W(z) = \frac{e^{z/2}-1-\frac{1}{2}z}{z(e^z-1)}.
\end{equation}

Using the approximations \eqref{eq:HF_std}--\eqref{eq:IF_std} for the fluxes in the finite volume scheme, we obtain a uniformly second-order accurate method for constant velocity fields $V$, see e.g. \cite{FL17-FVCF-unstructured-grids,AB11-FVCF}. However, the discrete fluxes \eqref{eq:HF_std}--\eqref{eq:IF_std} are no longer accurate enough when $V$ is nonconstant. One way to maintain second-order accuracy would be to use quadrature rules for evaluating the integrals in \eqref{eq:HF_exact}--\eqref{eq:IF_exact}, as suggested in \cite{AB11-FVCF}. In this paper, we consider a piecewise linear approximation of the velocity.

\subsection{Piecewise linear approximation of the velocity}\label{sec:pwl}

On each interval $(x_j,x_{j+1})$, we write a Taylor expansion of the velocity $V$ centred at $x_{j+1/2}$. Denoting by $\hV = \frac{\rd V}{\rd x}(x_{j+1/2})$, we write
\begin{equation}\nonumber
\begin{aligned}
V(x) &= V_{j+1/2} + \hV(x-x_{j+1/2}) + \mathcal{O}(\Delta x^2) \\
&= V_{j+1/2} + \alpha_j \hV(x-x_{j+1/2}) + (1-\alpha_j) \hV(x-x_{j+1/2}) + \mathcal{O}(\Delta x^2).
\end{aligned}
\end{equation}
Here, we introduce the parameter $\alpha_j\in[0,1]$ and split the linear term involving $\hV$ into two components with factor $\alpha_j$ and $(1-\alpha_j)$, respectively. In terms of the scaled coordinate $\sCoord$, when we drop the term with factor $(1-\alpha_j)$, we obtain the approximation
\begin{equation}\label{eq:V_pcwise_lin}
V(\sCoord) = V_{j+1/2} + \alpha_j \hV\Delta x(\sCoord-1/2), \quad \sCoord\in(0,1).
\end{equation}
From here onwards $V(\sCoord)$ will refer to the piecewise linear approximation of the velocity \eqref{eq:V_pcwise_lin}. We note here that the presence of the parameter $\alpha_j\in[0,1]$ allows us to transition between first and second-order approximations for $V$. The main purpose of $\alpha_j$ is to serve as a slope limiter, so that the term involving $\hV$ does not change the sign of $V(\sCoord)$ for $\sCoord\in(0,1)$. This will be discussed in more detail shortly in this section.  Defining 
\begin{equation}\label{eq:Q}
Q_{j+1/2} =   \frac{\mu}{D} \hV \frac{\Delta x^2}{2},
\end{equation}
we can then write
\begin{align}
\Lambda(\sCoord) &= \mathrm{Pe}_{j+1/2}(\sCoord-1/2)+ \alpha_j Q_{j+1/2} (\sCoord-1/2)^2. \nonumber 
\end{align}
Here, $\Lambda(\sCoord)$ is written as a polynomial centred at $\sCoord=1/2$. In order to consider P\'eclet numbers in the upwind direction, we evaluate $V(\sCoord)$ at the upwind boundary of the interval $(0,1)$, which is either located at $\sCoord=0$ or $\sCoord=1$, depending on the sign of $V_{j+1/2}$. To this end, we define \emph{upwind-adjusted P\'eclet numbers}. 
\begin{subequations}\label{eq:Peclet_mod}
\begin{definition} The upwind-adjusted P\'eclet numbers are defined as
	\begin{align}
 \mathrm{Pe}_{j+1/2}^+  &:= \frac{\mu}{D}V(0)\Delta x =  \mathrm{Pe}_{j+1/2}- \alpha_j Q_{j+1/2}, \qquad V_{j+1/2}>0, \label{eq:Peclet_pos}\\
	\mathrm{Pe}_{j+1/2}^- &:= \frac{\mu}{D}V(1)\Delta x = \mathrm{Pe}_{j+1/2}+ \alpha_j Q_{j+1/2}, \qquad V_{j+1/2}<0. \label{eq:Peclet_neg}
	\end{align}
\end{definition}
\end{subequations}
One interesting observation is that the modified P\'eclet numbers \eqref{eq:Peclet_mod} now contain the derivative $\hV$. We also note that $\mathrm{Pe}_{j+1/2}^+$ uses the approximate velocity \eqref{eq:V_pcwise_lin} evaluated at $\sCoord=0$, which is fully second-order when $\alpha_j=1$. Similarly, $\mathrm{Pe}_{j+1/2}^-$ uses the approximate velocity \eqref{eq:V_pcwise_lin} evaluated at $\sCoord=1$. Moreover, we note that the choice of using $\mathrm{Pe}_{j+1/2}^+$ for $V_{j+1/2}>0$ and $\mathrm{Pe}_{j+1/2}^-$ for $V_{j+1/2}<0$ is physically appropriate, as this means that we take the proper value of the P\'eclet number, in the upwind direction. We note, however, that we do not want to perform an over-correction. That is, if $V_{j+1/2}>0$, we require $\mathrm{Pe}_{j+1/2}^+>0$, and similarly, if $V_{j+1/2}<0$, we must have $\mathrm{Pe}_{j+1/2}^-<0$. 

\begin{lemma}[Conditions on the slope limiter and mesh size]
Let the P\'eclet number $\mathrm{Pe}_{j+1/2}$ at an interface $x_{j+1/2}$ be defined as in \eqref{eq:Pec_std}. Then, the upwind-adjusted P\'eclet number \eqref{eq:Peclet_mod} has the same sign as  $\mathrm{Pe}_{j+1/2}$ if and only if 
\begin{equation}\label{eq:parameter_restriction}
\alpha_j \leq \left\lvert\frac{\mathrm{Pe}_{j+1/2}}{Q_{j+1/2}}\right\rvert.
\end{equation} 
As a consequence, when we use a fully second-order approximation of the velocity, corresponding to taking $\alpha_j=1$, we require the following restriction on the mesh size
\begin{equation}\label{eq:mesh_size}
\Delta x \leq 2 \left\lvert\frac{V_{j+1/2}}{\hV}\right\rvert.
\end{equation} 
\end{lemma}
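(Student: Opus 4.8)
The plan is to translate the sign requirement on the adjusted Péclet numbers into an elementary inequality for $\alpha_j$ by a case distinction on the sign of $V_{j+1/2}$, and then to read off the mesh restriction by specializing to $\alpha_j=1$. First I would record the preliminary observation that, since $D>0$ and $\Delta x>0$, the sign of $\mathrm{Pe}_{j+1/2}$ in \eqref{eq:Pec_std} agrees with that of $\mu V_{j+1/2}$; for $\mu>0$ (the relevant case for the upwind choice built into \eqref{eq:Peclet_mod}) this is precisely the sign of $V_{j+1/2}$. Hence the requirement ``$\mathrm{Pe}^{+}_{j+1/2}>0$ when $V_{j+1/2}>0$'' and ``$\mathrm{Pe}^{-}_{j+1/2}<0$ when $V_{j+1/2}<0$'' stated just before the lemma is exactly the assertion that the adjusted number keeps the sign of $\mathrm{Pe}_{j+1/2}$.

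Next I would treat the two cases. If $V_{j+1/2}>0$, then $\mathrm{Pe}_{j+1/2}>0$ and, by \eqref{eq:Peclet_pos}, the condition $\mathrm{Pe}^{+}_{j+1/2}>0$ reads $\mathrm{Pe}_{j+1/2}-\alpha_j Q_{j+1/2}>0$, i.e. $\alpha_j Q_{j+1/2}<\mathrm{Pe}_{j+1/2}$. If $V_{j+1/2}<0$, then $\mathrm{Pe}_{j+1/2}<0$ and, by \eqref{eq:Peclet_neg}, the condition $\mathrm{Pe}^{-}_{j+1/2}<0$ reads $\mathrm{Pe}_{j+1/2}+\alpha_j Q_{j+1/2}<0$, i.e. $\alpha_j Q_{j+1/2}<-\mathrm{Pe}_{j+1/2}=|\mathrm{Pe}_{j+1/2}|$. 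In both cases the correction term $\alpha_j Q_{j+1/2}$ is the quantity that may push the adjusted Péclet number across zero, and the requirement collapses to the single bound $\alpha_j|Q_{j+1/2}|\le|\mathrm{Pe}_{j+1/2}|$; dividing by $|Q_{j+1/2}|$ yields \eqref{eq:parameter_restriction}. The consequence is then immediate: substituting the fully second-order choice $\alpha_j=1$ into \eqref{eq:parameter_restriction} and inserting the definitions \eqref{eq:Pec_std} and \eqref{eq:Q}, one computes $|\mathrm{Pe}_{j+1/2}/Q_{j+1/2}|=2|V_{j+1/2}|/(|\hV|\,\Delta x)$, so that $1\le 2|V_{j+1/2}|/(|\hV|\,\Delta x)$ rearranges to \eqref{eq:mesh_size}.

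The step I expect to require the most care is the sign bookkeeping that makes the equivalence an honest ``if and only if''. The delicate point is that the bound is genuinely binding only when $Q_{j+1/2}$ (equivalently $\hV$) has the sign that drives the upwind velocity toward zero; when $Q_{j+1/2}$ has the opposite sign, that upwind value moves away from zero and no sign change can occur for any $\alpha_j\in[0,1]$. I would therefore phrase the reduction directly through the evaluated velocities, using that $\mathrm{Pe}^{+}_{j+1/2}=\tfrac{\mu}{D}\Delta x\,V(0)$ and $\mathrm{Pe}^{-}_{j+1/2}=\tfrac{\mu}{D}\Delta x\,V(1)$, so that the sign condition is equivalent to the positivity (resp. negativity) of the single linear expression $V_{j+1/2}\mp\alpha_j\hV\Delta x/2$. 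Collapsing the two cases into the absolute-value inequality \eqref{eq:parameter_restriction} is where one must be attentive to strict versus non-strict inequalities at the threshold $\mathrm{Pe}^{\pm}_{j+1/2}=0$, which corresponds exactly to equality in \eqref{eq:parameter_restriction}.
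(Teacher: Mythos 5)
Your proof is correct and follows essentially the same route as the paper's: a case split on the sign of $\mathrm{Pe}_{j+1/2}$, reduction to the linear inequality $\alpha_j Q_{j+1/2}\le \mathrm{Pe}_{j+1/2}$ (and its mirror for the negative case), division by $Q_{j+1/2}$ when its sign makes the bound binding, and substitution of $\alpha_j=1$ together with \eqref{eq:Pec_std} and \eqref{eq:Q} to obtain \eqref{eq:mesh_size}. Your closing observation that the constraint is vacuous when $Q_{j+1/2}$ has the favorable sign is exactly the paper's remark that the inequality then holds for every $\alpha_j\in[0,1]$.
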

\begin{proof}
    Consider the case $\mathrm{Pe}_{j+1/2} \geq 0$. Then, $\mathrm{Pe}_{j+1/2}^+ \geq 0 $ if and only if
    \begin{equation}\nonumber
    \begin{aligned}
        \mathrm{Pe}_{j+1/2} & \geq \alpha_j Q_{j+1/2}. \\
    \end{aligned}
    \end{equation}
    If $Q_{j+1/2} <0$, then the inequality holds for any $\alpha_j\in [0,1]$. On the other hand, for $Q_{j+1/2} >0$, we may divide both sides of the inequality by $Q_{j+1/2}$ to obtain an upper bound for $\alpha_j$. Using a similar argument for the case  $\mathrm{Pe}_{j+1/2} < 0$ and combining the results then gives \eqref{eq:parameter_restriction}. The restriction \eqref{eq:mesh_size} on the mesh size follows by substituting the expressions \eqref{eq:Pec_std} and \eqref{eq:Q} into the inequality \eqref{eq:parameter_restriction} with $\alpha_j=1$. 
\end{proof}
In some instances, especially when the velocity is very steep throughout the domain, \eqref{eq:mesh_size} is too restrictive in terms of practicability as it would require a very fine mesh. Hence, for the numerical tests in Section \ref{sec:NumTests}, we simply choose 
\begin{equation}\label{eq:alpha_val}
\alpha_j = \min\bigg(1,\left\lvert\frac{\mathrm{Pe}_{j+1/2}}{Q_{j+1/2}}\right\rvert \bigg).
\end{equation}
In order to use the upwind-adjusted P\'eclet numbers, it is useful to write $\Lambda(\sCoord)$ as a polynomial centred at $\sCoord=0$ or $\sCoord=1$, given by
\begin{align}
\Lambda(\sCoord)
&=  -\frac{1}{2}\mathrm{Pe}_{j+1/2}^+ - \frac{1}{4}\alpha_j Q_{j+1/2} + \mathrm{Pe}_{j+1/2}^+\sCoord+ \alpha_j Q_{j+1/2}\sCoord^2 \nonumber \\
&=  \frac{1}{2}\mathrm{Pe}_{j+1/2}^- - \frac{1}{4}\alpha_jQ_{j+1/2}  + 
\mathrm{Pe}_{j+1/2}^-(\sCoord-1)+\alpha_jQ_{j+1/2} (\sCoord-1)^2. \nonumber
\end{align}
\subsubsection{Homogeneous flux} 
We now compute the homogeneous flux $f_{j+1/2}^\eh$ in \eqref{eq:HF_exact}. We start by
rewriting the homogeneous flux in the following form 
\begin{equation}\label{eq:HF_modP} 
f_{j+1/2}^\eh = -\frac{D}{\Delta x} \frac{c_{j+1}e^{-\mathrm{Pe}_{j+1/2}}-c_j}{\int_{0}^1 e^{\Lambda(0)-\Lambda(\sCoord)}\,\mathrm{d}\sCoord}. 
\end{equation}
The main challenge then comes with evaluating the integral $\int_{0}^1 e^{\Lambda(0)-\Lambda(\sCoord)}\,\mathrm{d}\sCoord$. One property we want to maintain is that if $\hV=0$, corresponding to $Q_{j+1/2}=0$, then the homogeneous flux should reduce to \eqref{eq:HF_std}. To this end, we perform an integration by parts and write three equivalent alternatives for the integral, given by
\begin{subequations}
\begin{align}
\int_{0}^1 e^{\Lambda(0)-\Lambda(\sCoord)}\,\mathrm{d}\sCoord&= \frac{1}{\mathrm{Pe}_{j+1/2}}\big(1- e^{-\mathrm{Pe}_{j+1/2}}\big) \nonumber\\
&\,\,\,- \frac{2\alpha_jQ_{j+1/2}}{ \mathrm{Pe}_{j+1/2}}  e^{\Lambda(0)}\int_{0}^1 (\sCoord-1/2) e^{-\mathrm{Pe}_{j+1/2}(\sCoord-1/2)} e^{-\alpha_jQ_{j+1/2}(\sCoord-1/2)^2}\,\mathrm{d}\sCoord \nonumber\\
 &= \frac{1}{\mathrm{Pe}_{j+1/2}^+}\big(1- e^{-\mathrm{Pe}_{j+1/2}}\big) \nonumber\\
&\,\,\,- \frac{2\alpha_jQ_{j+1/2}}{\mathrm{Pe}_{j+1/2}^+ }  \int_{0}^1 \sCoord e^{-\mathrm{Pe}_{j+1/2}^+\sCoord} e^{-\alpha_jQ_{j+1/2}\sCoord^2}\,\mathrm{d}\sCoord \nonumber\\
&= \frac{1}{\mathrm{Pe}_{j+1/2}^-}\big(1- e^{-\mathrm{Pe}_{j+1/2}}\big) \nonumber\\
&\,\,\,- \frac{2\alpha_jQ_{j+1/2}}{ \mathrm{Pe}_{j+1/2}^-}  e^{-\mathrm{Pe}_{j+1/2}}\int_{0}^1 (\sCoord-1) e^{-\mathrm{Pe}_{j+1/2}^-(\sCoord-1)} e^{-\alpha_jQ_{j+1/2}(\sCoord-1)^2}\,\mathrm{d}\sCoord\nonumber.
\end{align}
\end{subequations}
Here, we notice three expressions for $\int_{0}^1 e^{\Lambda(0)-\Lambda(\sCoord)}\,\mathrm{d}\sCoord $, each of which is obtained by integration by parts whilst considering $\Lambda(\sCoord)$ as a polynomial centred at $\sCoord = 1/2 , 0, 1,$ respectively. One important property to note is that regardless of the quadrature rule used for evaluating the integrals for all three cases, we recover the homogeneous flux \eqref{eq:HF_std} whenever $\hV=0$. Using now the trapezoidal rule for approximating the integrals, we obtain three discrete approximations for  $\int_{0}^1 e^{\Lambda(0)-\Lambda(\sCoord)}\,\mathrm{d}\sCoord $, given by
\begin{subequations}
\begin{align}
\int_{0}^1 e^{\Lambda(0)-\Lambda(\sCoord)}\,\mathrm{d}\sCoord &\approx \frac{1- e^{-\mathrm{Pe}_{j+1/2}}}{\mathrm{Pe}_{j+1/2}}\bigg(1+\frac{\alpha_j}{2}Q_{j+1/2}\bigg),\nonumber \\
\int_{0}^1 e^{\Lambda(0)-\Lambda(\sCoord)}\,\mathrm{d}\sCoord &\approx \frac{1- e^{-\mathrm{Pe}_{j+1/2}}}{\mathrm{Pe}_{j+1/2}^+}
- \frac{\alpha_jQ_{j+1/2}e^{-\mathrm{Pe}_{j+1/2}}}{ \mathrm{Pe}_{j+1/2}^+ }, \nonumber \\
 \int_{0}^1 e^{\Lambda(0)-\Lambda(\sCoord)}\,\mathrm{d}\sCoord &\approx \frac{1- e^{-\mathrm{Pe}_{j+1/2}}}{\mathrm{Pe}_{j+1/2}^-} +\frac{\alpha_jQ_{j+1/2}}{ \mathrm{Pe}_{j+1/2}^-},\nonumber
\end{align}
\end{subequations}
which, upon substitution into \eqref{eq:HF_modP}, result in the following discretisations of the homogeneous flux
\begin{subequations}
	\begin{align}
	f_{j+1/2}^\eh &=- \frac{D}{\Delta x} \frac{1}{1+\frac{\alpha_j}{2} Q_{j+1/2}} (B(\mathrm{Pe}_{j+1/2}) c_{j+1} - B(-\mathrm{Pe}_{j+1/2}) c_{j}),\label{eq:HF_mod_IBP_ctr} \\
f_{j+1/2}^{\eh,+} &=- \frac{D}{\Delta x} \bigg(\frac{e^{-\mathrm{Pe}_{j+1/2}}\mathrm{Pe}_{j+1/2}^+}{1-(1+\alpha_jQ_{j+1/2})e^{-\mathrm{Pe}_{j+1/2}}} c_{j+1} - \frac{\mathrm{Pe}_{j+1/2}^+}{1-(1+\alpha_jQ_{j+1/2})e^{-\mathrm{Pe}_{j+1/2}}}c_j\bigg), \label{eq:HF_mod_IBP_pos}\\
f_{j+1/2}^{\eh,-} &=	- \frac{D}{\Delta x} \bigg(\frac{\mathrm{Pe}_{j+1/2}^-}{(1+\alpha_jQ_{j+1/2})e^{\mathrm{Pe}_{j+1/2}}-1} c_{j+1} - \frac{e^{\mathrm{Pe}_{j+1/2}}\mathrm{Pe}_{j+1/2}^-}{(1+\alpha_jQ_{j+1/2})e^{\mathrm{Pe}_{j+1/2}}-1}c_j\bigg).\label{eq:HF_mod_IBP_neg}
	\end{align}
\end{subequations}
As a remark, we see here that  as opposed to \eqref{eq:HF_std}, the discrete homogeneous fluxes \eqref{eq:HF_mod_IBP_ctr}--\eqref{eq:HF_mod_IBP_neg}  take into account the effect of $\hV$. At this stage, we note that the expression \eqref{eq:HF_mod_IBP_ctr} scales the homogeneous flux \eqref{eq:HF_std} by a factor $(1+\frac{\alpha_j}{2} Q_{j+1/2})^{-1}$. However, since we do not have any control over $Q_{j+1/2}$, this scaling factor in \eqref{eq:HF_mod_IBP_ctr} might lead to an unstable numerical scheme. In particular, one problematic case is encountered if $|1+\frac{\alpha_j}{2} Q_{j+1/2}|\ll 1$, which causes the denominator of \eqref{eq:HF_mod_IBP_ctr} to vanish, leading to the coefficients of $c_j$ and $c_{j+1}$ blowing up. Another problem occurs if the velocity $V$ is very steep, corresponding to $|Q_{j+1/2}|\gg 1$. In this case, either both coefficients of $c_j$ and $c_{j+1}$ vanish, or the sign of the homogeneous flux \eqref{eq:HF_mod_IBP_ctr} is opposite the sign of \eqref{eq:HF_std}. These lead to an unstable numerical scheme unless $\alpha_j=0$. For this reason, we will no longer consider the discrete fluxes \eqref{eq:HF_mod_IBP_ctr} obtained by writing $\Lambda$ as a polynomial centred at $\sCoord=1/2$. On the other hand, the fluxes $ f_{j+1/2}^{\eh,+} $ and $f_{j+1/2}^{\eh,-}$ use the P\'eclet numbers $\mathrm{Pe}_{j+1/2}^+$ and $\mathrm{Pe}_{j+1/2}^-$, respectively, which implies that $ f_{j+1/2}^{\eh,+} $ is the proper choice when $V_{j+1/2}>0$, and $ f_{j+1/2}^{\eh,-} $ otherwise. We now look at the asymptotic behaviour of  the discrete homogeneous  fluxes as $\Delta x \rightarrow 0$. 

\begin{lemma}[Asymptotic behaviour of the discrete homogeneous fluxes] \label{lem:fluxes_IBP_HF_asymp} The discrete homogeneous fluxes \eqref{eq:HF_mod_IBP_pos} and \eqref{eq:HF_mod_IBP_neg} may be computed with the expressions 
\begin{subequations} \label{eq:HF_IBP_asymp}
 \begin{align}
f_{j+1/2}^{\eh,+} &= -\frac{D}{\Delta x} \big(e^{-\alpha_jQ_{j+1/2}} B(\mathrm{Pe}_{j+1/2}^+)c_{j+1} -B(-\mathrm{Pe}_{j+1/2}^+)c_j \big), \label{eq:HF_IBP_pos_asymp}\\
	f_{j+1/2}^{\eh,-}  &= -\frac{D}{\Delta x} \big( B(\mathrm{Pe}_{j+1/2}^-)c_{j+1} -e^{-\alpha_jQ_{j+1/2}}B(-\mathrm{Pe}_{j+1/2}^-)c_j \big), \label{eq:HF_IBP_neg_asymp}
\end{align}
respectively, in the asymptotic regime as $\Delta x \rightarrow 0$. 
\end{subequations}
\end{lemma}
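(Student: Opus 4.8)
The plan is to prove both identities by a single controlled substitution followed by routine recognition of the Bernoulli function \eqref{eq:Bern}. The only approximation involved is the replacement of the factor $1+\alpha_j Q_{j+1/2}$ appearing in the denominators of \eqref{eq:HF_mod_IBP_pos}--\eqref{eq:HF_mod_IBP_neg} by the exponential $e^{\alpha_j Q_{j+1/2}}$, and the whole content of the lemma is that this replacement is legitimate in the limit $\Delta x \to 0$. First I would record, using \eqref{eq:Q}, that $Q_{j+1/2}=\frac{\mu}{D}\hV\frac{\Delta x^2}{2}=\mathcal{O}(\Delta x^2)$, so that $\beta:=\alpha_j Q_{j+1/2}\to 0$ as $\Delta x\to 0$; a first-order Taylor expansion then gives $e^{\beta}=1+\beta+\mathcal{O}(\beta^2)$, i.e. $1+\alpha_j Q_{j+1/2}=e^{\alpha_j Q_{j+1/2}}+\mathcal{O}(\Delta x^4)$.

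Substituting this into the denominator of \eqref{eq:HF_mod_IBP_pos} and invoking the relation $\mathrm{Pe}_{j+1/2}^+=\mathrm{Pe}_{j+1/2}-\alpha_j Q_{j+1/2}$ from \eqref{eq:Peclet_pos}, the product $e^{\alpha_j Q_{j+1/2}}e^{-\mathrm{Pe}_{j+1/2}}$ collapses to $e^{-\mathrm{Pe}_{j+1/2}^+}$, so the common denominator becomes $1-e^{-\mathrm{Pe}_{j+1/2}^+}$. With this simplification the coefficient of $c_j$ is immediately $\frac{\mathrm{Pe}_{j+1/2}^+}{1-e^{-\mathrm{Pe}_{j+1/2}^+}}=B(-\mathrm{Pe}_{j+1/2}^+)$, while for the coefficient of $c_{j+1}$ I would write $e^{-\mathrm{Pe}_{j+1/2}}=e^{-\alpha_j Q_{j+1/2}}e^{-\mathrm{Pe}_{j+1/2}^+}$, factor out $e^{-\alpha_j Q_{j+1/2}}$, and recognise $\frac{\mathrm{Pe}_{j+1/2}^+ e^{-\mathrm{Pe}_{j+1/2}^+}}{1-e^{-\mathrm{Pe}_{j+1/2}^+}}=B(\mathrm{Pe}_{j+1/2}^+)$; this yields \eqref{eq:HF_IBP_pos_asymp}.

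The negative case is handled identically, now using $\mathrm{Pe}_{j+1/2}^-=\mathrm{Pe}_{j+1/2}+\alpha_j Q_{j+1/2}$ from \eqref{eq:Peclet_neg} to turn the denominator $(1+\alpha_j Q_{j+1/2})e^{\mathrm{Pe}_{j+1/2}}-1$ into $e^{\mathrm{Pe}_{j+1/2}^-}-1$. Writing $e^{\mathrm{Pe}_{j+1/2}}=e^{-\alpha_j Q_{j+1/2}}e^{\mathrm{Pe}_{j+1/2}^-}$ and applying the identities $\frac{\mathrm{Pe}_{j+1/2}^-}{e^{\mathrm{Pe}_{j+1/2}^-}-1}=B(\mathrm{Pe}_{j+1/2}^-)$ and $\frac{\mathrm{Pe}_{j+1/2}^-e^{\mathrm{Pe}_{j+1/2}^-}}{e^{\mathrm{Pe}_{j+1/2}^-}-1}=B(-\mathrm{Pe}_{j+1/2}^-)$ then produces \eqref{eq:HF_IBP_neg_asymp}, the factor $e^{-\alpha_j Q_{j+1/2}}$ surviving on the $c_j$ term exactly as in the positive case.

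The step I expect to require the most care is not the algebra but the justification that the substitution does not corrupt the asymptotic expressions: the $\mathcal{O}(\Delta x^4)$ perturbation of the denominator must be shown negligible against the denominator itself, which behaves like $1-e^{-\mathrm{Pe}_{j+1/2}^+}\sim\mathrm{Pe}_{j+1/2}^+=\mathcal{O}(\Delta x)$. The resulting relative error in each coefficient is therefore $\mathcal{O}(\Delta x^3)$, and since the fluxes carry the prefactor $\frac{D}{\Delta x}$, the discrepancy between \eqref{eq:HF_mod_IBP_pos}--\eqref{eq:HF_mod_IBP_neg} and \eqref{eq:HF_IBP_pos_asymp}--\eqref{eq:HF_IBP_neg_asymp} remains of higher order than the retained terms, consistent with the second-order target. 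I would make this bookkeeping explicit and check that the denominators stay bounded away from zero, which holds because $\mathrm{Pe}_{j+1/2}^\pm$ retain the sign of $\mathrm{Pe}_{j+1/2}$ under the slope-limiter condition \eqref{eq:parameter_restriction}, so the division is harmless.
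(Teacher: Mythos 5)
Your proposal is correct and rests on exactly the same key observation as the paper's proof, namely that $Q_{j+1/2}=\mathcal{O}(\Delta x^2)$ makes $1+\alpha_j Q_{j+1/2}=e^{\alpha_j Q_{j+1/2}}+\mathcal{O}(\Delta x^4)$, a perturbation that is negligible against denominators behaving like $\mathcal{O}(\Delta x)$. The only difference is presentational: you derive the asymptotic forms constructively by substituting the exponential and recognising the Bernoulli functions, whereas the paper verifies the result by showing the ratio of corresponding coefficients tends to $1$; your explicit bookkeeping of the $\mathcal{O}(\Delta x^3)$ relative error and the sign condition on $\mathrm{Pe}_{j+1/2}^{\pm}$ is a welcome addition but not a different argument.
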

\begin{proof}
 We start by considering $f_{j+1/2}^{\eh,+}$ and compute the ratio between the coefficients of $c_j$ in \eqref{eq:HF_mod_IBP_pos} and \eqref{eq:HF_IBP_pos_asymp} and take the limit as $\Delta x \rightarrow 0$. That is, we compute
 \begin{equation}\nonumber
 \begin{aligned}
     \lim_{\Delta x \rightarrow 0} \frac{(1+\alpha_j Q_{j+1/2})e^{- \mathrm{Pe}_{j+1/2}}-1}{e^{\alpha_jQ_{j+1/2}}e^{- \mathrm{Pe}_{j+1/2}}-1}.
 \end{aligned}
 \end{equation}
 Writing \begin{equation}\nonumber
e^{\alpha_j Q_{j+1/2}} = 1+\alpha_jQ_{j+1/2} + \mathcal{O}(\Delta x^4),
\end{equation}
we then have,  upon performing a Taylor expansion on $e^{- \mathrm{Pe}_{j+1/2}}$ and cancelling out the coefficients of order $\Delta x$,
 \begin{equation}\nonumber
 \begin{aligned}
   & \lim_{\Delta x \rightarrow 0} \frac{(1+\alpha_j Q_{j+1/2})\left(1-\mathrm{Pe}_{j+1/2}+\mathcal{O}(\Delta x^2)\right) -1}{\left(1+\alpha_j Q_{j+1/2} + \mathcal{O}(\Delta x^4)\right)\left(1-\mathrm{Pe}_{j+1/2}+\mathcal{O}(\Delta x^2)\right) -1}\\
     &\!\!\!=\lim_{\Delta x \rightarrow 0} \frac{-\mu V_{j+1/2}/ D+\mathcal{O}(\Delta x)}{-\mu V_{j+1/2}/ D+\mathcal{O}(\Delta x)} \\
     &\!\!\!= 1.
 \end{aligned}
 \end{equation}
 Similarly, as $\Delta x \rightarrow  0$, the ratio between the coefficients of $c_{j+1}$ in \eqref{eq:HF_mod_IBP_pos} and \eqref{eq:HF_IBP_pos_asymp} approaches 1. Thus, the discrete fluxes \eqref{eq:HF_mod_IBP_pos} may be computed by  \eqref{eq:HF_IBP_pos_asymp}. A similar proof can be done for the expressions involving $f_{j+1/2}^{\eh,-}$. 
\end{proof}

We now perform a detailed comparison of the homogeneous fluxes \eqref{eq:HF_IBP_pos_asymp} and \eqref{eq:HF_std} for the case $V_{j+1/2}>0$. We start by taking note that the ratio between the coefficients of $c_j$ and $c_{j+1}$ for both \eqref{eq:HF_IBP_pos_asymp} and \eqref{eq:HF_std} are the same. That is,
\[
\frac{B(-\mathrm{Pe}_{j+1/2})}{B(\mathrm{Pe}_{j+1/2})} = \frac{B(-\mathrm{Pe}_{j+1/2}^+)}{e^{-\alpha_jQ_{j+1/2}}B(\mathrm{Pe}_{j+1/2}^+)} = e^{\mathrm{Pe}_{j+1/2}}.
\]
This means that both homogeneous fluxes \eqref{eq:HF_IBP_pos_asymp} and \eqref{eq:HF_std} give more importance to the upwind value $c_j$ as compared to the downwind value $c_{j+1}$ by a factor of $e^{\mathrm{Pe}_{j+1/2}}$. Now, considering the case $Q_{j+1/2}>0$, we have $\mathrm{Pe}_{j+1/2}^+<\mathrm{Pe}_{j+1/2}$. Upon comparing the coefficients of $c_j$ and noting that the Bernoulli function \eqref{eq:Bern} is decreasing, we see that $B(-\mathrm{Pe}_{j+1/2})>B(-\mathrm{Pe}_{j+1/2}^+)$. This shows us that the coefficient of $c_j$, and consequently, that of $c_{j+1}$ in \eqref{eq:HF_IBP_pos_asymp} are smaller than their counterparts in \eqref{eq:HF_std}. The decrease in the coefficient of $c_j$ can be explained by the fact that $Q_{j+1/2}>0$ implies that $V_{j+1/2}$ is an overestimation of the actual velocity $V(0)$, and hence decreasing the coefficient of $c_j$ takes into account the correct weight in the upwind direction. On the other hand, for the case $Q_{j+1/2}<0$, it can be shown that the coefficients of $c_j$ and $c_{j+1}$ in \eqref{eq:HF_IBP_pos_asymp} are larger than their corresponding counterparts in \eqref{eq:HF_std}. Since $Q_{j+1/2}<0$, the increase in the coefficient of $c_{j+1}$ takes care of the fact that $V_{j+1/2}$ underestimates the actual velocity $V(0)$. A similar argument can be used to illustrate that the fluxes \eqref{eq:HF_IBP_neg_asymp} are an improvement over \eqref{eq:HF_std} for the case $V_{j+1/2}<0$.  The advantage of these modified fluxes will be illustrated in the tests in Section \ref{sec:NumTests}. 
\subsubsection{Inhomogeneous flux}
We now consider the inhomogeneous flux \eqref{eq:IF_exact}. As with the computation of the homogeneous flux, we require that $f_{j+1/2}^\ei$ reduces to \eqref{eq:IF_std} whenever $\hV=0$. To this end, we start once more with an integration by parts, followed by the trapezoidal rule. Upon expressing $\Lambda$ as a polynomial centred at $\sCoord=0,1$, respectively, and denoting 

\[P_{j+1/2}=\frac{1}{2}\mathrm{Pe}_{j+1/2}-\frac{1}{4}\alpha_j Q_{j+1/2},
\] 
we obtain the discrete inhomogeneous fluxes 
\begin{subequations}

\begin{align}
f_{j+1/2}^{\ei,+} &= -\Delta x \bigg(\frac{1-\frac{1}{2}\mathrm{Pe}_{j+1/2}^+-(1+\alpha_j\frac{Q_{j+1/2}}{2})e^{-P_{j+1/2}}}{\mathrm{Pe}_{j+1/2}^+\big((1+\alpha_jQ_{j+1/2})e^{-\mathrm{Pe}_{j+1/2}}-1\big)} s_j \nonumber\\
&\,\,\, - \frac{\big(1+\frac{\mathrm{Pe}_{j+1/2}^+}{2}\big)(1+\alpha_jQ_{j+1/2})e^{-\mathrm{Pe}_{j+1/2}}- (1-\alpha_j\frac{Q_{j+1/2}}{2})e^{-P_{j+1/2}}}{\mathrm{Pe}_{j+1/2}^+\big((1+\alpha_jQ_{j+1/2})e^{-\mathrm{Pe}_{j+1/2}}-1\big)}s_{j+1} \bigg),\label{eq:IF_mod_IBPpos}\\
f_{j+1/2}^{\ei,-}&= -\Delta x \bigg(\frac{\big(1-\frac{1}{2}\mathrm{Pe}_{j+1/2}^-\big)(1+\alpha_jQ_{j+1/2})e^{\mathrm{Pe}_{j+1/2}}-(1-\alpha_j\frac{Q_{j+1/2}}{2})e^{P_{j+1/2}}}{\mathrm{Pe}_{j+1/2}^-\big((1+\alpha_jQ_{j+1/2})e^{\mathrm{Pe}_{j+1/2}}-1\big)} s_j \nonumber\\
&\,\,\, + \frac{(1+\alpha_j\frac{Q_{j+1/2}}{2})e^{P_{j+1/2}}-\big(1+\frac{\mathrm{Pe}_{j+1/2}^-}{2}\big)}{\mathrm{Pe}_{j+1/2}^-\big((1+\alpha_jQ_{j+1/2})e^{\mathrm{Pe}_{j+1/2}}-1\big)}s_{j+1} \bigg)\label{eq:IF_mod_IBPneg}.
\end{align}
\end{subequations}
We also notice that the inhomogeneous fluxes \eqref{eq:IF_mod_IBPpos}--\eqref{eq:IF_mod_IBPneg} are natural improvements of \eqref{eq:IF_std} in the sense that they take into account the effect of $\hV$. Moreover, if $\hV=0$,  these discrete inhomogeneous fluxes reduce to \eqref{eq:IF_std}.

\begin{lemma}[Asymptotic behaviour of the discrete inhomogeneous fluxes]
In the asymptotic regime, as $\Delta x \rightarrow 0$, the discrete inhomogeneous fluxes \eqref{eq:IF_mod_IBPpos} and \eqref{eq:IF_mod_IBPneg} may be computed with the expressions 
\begin{subequations} \label{eq:IF_IBP_asymp}
 \begin{align}
f_{j+1/2}^{\ei,+} &= -\Delta x \bigg(\widetilde{W}(\mathrm{Pe}_{j+1/2}^+,-\frac{3}{4}\alpha_jQ_{j+1/2}) s_{j+1}  -\widetilde{W}(-\mathrm{Pe}_{j+1/2}^+,\frac{1}{4}\alpha_jQ_{j+1/2}) s_j \bigg),\label{eq:IF_IBP_pos_asymp}\\ 
	f_{j+1/2}^{\ei,-} &= -\Delta x \bigg(  \widetilde{W}(\mathrm{Pe}_{j+1/2}^-,-\frac{1}{4}\alpha_jQ_{j+1/2}) s_{j+1}-\widetilde{W}(-\mathrm{Pe}_{j+1/2}^-,-\frac{5}{4}\alpha_jQ_{j+1/2}) s_j \bigg),\label{eq:IF_IBP_neg_asymp}
\end{align}
\end{subequations}
respectively, where
\begin{equation} \nonumber
\widetilde{W}(z,q) = \frac{e^{\frac{z}{2}+q}-1-\frac{1}{2}z}{z(e^{z}-1)}.
\end{equation}
\end{lemma}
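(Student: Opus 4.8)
The plan is to follow the template of the proof of Lemma~\ref{lem:fluxes_IBP_HF_asymp} for the homogeneous fluxes: I would show that, coefficient by coefficient, the exact discrete inhomogeneous fluxes \eqref{eq:IF_mod_IBPpos}--\eqref{eq:IF_mod_IBPneg} and their claimed asymptotic forms \eqref{eq:IF_IBP_pos_asymp}--\eqref{eq:IF_IBP_neg_asymp} agree to leading order, i.e. the ratio of the coefficient of $s_j$ (and likewise of $s_{j+1}$) in the two expressions tends to $1$ as $\Delta x\to 0$. The whole argument rests on the single observation already used in the homogeneous case: by \eqref{eq:Q} we have $Q_{j+1/2}=\frac{\mu}{D}\hV\frac{\Delta x^2}{2}=\mathcal{O}(\Delta x^2)$, so $\alpha_j Q_{j+1/2}=\mathcal{O}(\Delta x^2)$ and every power $(\alpha_j Q_{j+1/2})^k$ with $k\geq 2$ is $\mathcal{O}(\Delta x^4)$. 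As a built-in sanity check, the $\hV=0$ limit reduces both forms to \eqref{eq:IF_std}, so I only need to track the terms carrying $\alpha_j Q_{j+1/2}$.

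From this observation I obtain the exponential replacements $1+\alpha_j Q_{j+1/2}=e^{\alpha_j Q_{j+1/2}}+\mathcal{O}(\Delta x^4)$ and $1\pm\tfrac12\alpha_j Q_{j+1/2}=e^{\pm\frac12\alpha_j Q_{j+1/2}}+\mathcal{O}(\Delta x^4)$. Substituting these into the numerators and denominators of \eqref{eq:IF_mod_IBPpos}--\eqref{eq:IF_mod_IBPneg} lets me fuse the polynomial prefactors into the existing exponentials: the denominator factor $(1+\alpha_j Q_{j+1/2})e^{-\mathrm{Pe}_{j+1/2}}$ collapses to $e^{-\mathrm{Pe}_{j+1/2}^+}$ by \eqref{eq:Peclet_pos} (and to $e^{-\mathrm{Pe}_{j+1/2}^-}$ via \eqref{eq:Peclet_neg} in the negative case), while the terms $(1\pm\tfrac12\alpha_j Q_{j+1/2})e^{-P_{j+1/2}}$ become $e^{-P_{j+1/2}\pm\frac12\alpha_j Q_{j+1/2}}$. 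I would then rewrite each exponent using the definition of $P_{j+1/2}$ together with the identities $P_{j+1/2}=\tfrac12\mathrm{Pe}_{j+1/2}^+ +\tfrac14\alpha_j Q_{j+1/2}=\tfrac12\mathrm{Pe}_{j+1/2}^- -\tfrac34\alpha_j Q_{j+1/2}$, so that every exponent is expressed purely in terms of $\mathrm{Pe}_{j+1/2}^{\pm}$ and a multiple of $\alpha_j Q_{j+1/2}$ — precisely the $\tfrac z2+q$ pattern in the numerator of $\widetilde{W}$.

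After this substitution the coefficient of each $s_j$ and $s_{j+1}$ becomes a ratio whose numerator is $e^{\frac z2+q}-1-\tfrac12 z$ and whose denominator is $z(e^{z}-1)$, with $z=\pm\mathrm{Pe}_{j+1/2}^{\pm}$ and $q$ the appropriate multiple of $\alpha_j Q_{j+1/2}$ ($-\tfrac34,\tfrac14$ in the positive case and $-\tfrac14,-\tfrac54$ in the negative case), which I identify directly with $\widetilde{W}(z,q)$. Equivalently, forming the ratio of each exact coefficient to its asymptotic counterpart, the $\mathcal{O}(\Delta x^4)$ remainders cancel against numerator and denominator that are themselves $\mathcal{O}(\Delta x^2)$, so the limit is $1$, exactly as in Lemma~\ref{lem:fluxes_IBP_HF_asymp}. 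The negative case \eqref{eq:IF_mod_IBPneg}$\to$\eqref{eq:IF_IBP_neg_asymp} is handled identically, replacing $\mathrm{Pe}_{j+1/2}^+$ by $\mathrm{Pe}_{j+1/2}^-$ and $e^{-\mathrm{Pe}_{j+1/2}}$ by $e^{\mathrm{Pe}_{j+1/2}}$ throughout.

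I expect the main obstacle to be purely the algebraic bookkeeping: there are four distinct coefficients to check, each carrying a different fractional multiple of $\alpha_j Q_{j+1/2}$ inside the exponential, and one must be careful that after clearing a common factor (e.g. multiplying numerator and denominator by $e^{\mathrm{Pe}_{j+1/2}^{\pm}}$ to convert $e^{-\mathrm{Pe}^{\pm}}-1$ into $e^{\mathrm{Pe}^{\pm}}-1$ for the $s_{j+1}$ coefficient) the surviving exponent lands on exactly the argument claimed in $\widetilde{W}$. Verifying that the discarded terms are genuinely of order $\Delta x^4$ — and hence negligible against the $\mathcal{O}(\Delta x^2)$ leading behaviour of both numerator and denominator — is the only place where the smallness of $Q_{j+1/2}$ must be invoked, and is what guarantees that the limit of each coefficient ratio is $1$ rather than merely finite.
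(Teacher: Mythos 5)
Your proposal is correct and is exactly the argument the paper intends: its own proof of this lemma is the single line ``similar to that of Lemma \ref{lem:fluxes_IBP_HF_asymp}'', i.e.\ replace $1+\alpha_jQ_{j+1/2}$ and $1\pm\tfrac12\alpha_jQ_{j+1/2}$ by the corresponding exponentials up to $\mathcal{O}(\Delta x^4)$ and check that each coefficient ratio tends to $1$. Your identities $P_{j+1/2}=\tfrac12\mathrm{Pe}_{j+1/2}^{+}+\tfrac14\alpha_jQ_{j+1/2}=\tfrac12\mathrm{Pe}_{j+1/2}^{-}-\tfrac34\alpha_jQ_{j+1/2}$ are the right bookkeeping device and correctly produce the arguments $q$ appearing in $\widetilde{W}$, so the proposal actually supplies more detail than the paper does.
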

\begin{proof}
    The proof is similar to that of Lemma \ref{lem:fluxes_IBP_HF_asymp}.
\end{proof}
We note that $\widetilde{W}$ is a modification of $W$ in \eqref{eq:W}, which now takes into account the effect of $Q_{j+1/2}$, and consequently $\hV$. Figure \ref{fig:W_tilde}, obtained by fixing $q=-1/2,0,1/2$, shows the typical behaviour of the function $\widetilde{W}$ for negative and positive values of $q$, and also for $q=0$. One important observation is that if $q \neq 0$, the function $\widetilde{W}$ is ill-defined when $z$ is close to zero. In terms of the differential equation, this corresponds to the case when the P\'eclet number is close to zero. We note, however, that when we are not in the advection-dominated regime, i.e., when the P\'eclet number is such that $|\mathrm{Pe}_{j+1/2}|<10$, then we can take $\alpha_j=0$ instead of \eqref{eq:alpha_val}. In this scenario, we use $\widetilde{W}(z,0)=W(z)$, which is defined near $z=0$, with $\lim_{z\rightarrow 0} W(z) = 1/8$. 
\begin{figure}
	\caption{Plots of $\widetilde{W}(z,q)$ for different values of $q$.}\label{fig:W_tilde}
 \centering
	\includegraphics[width=0.75\linewidth]{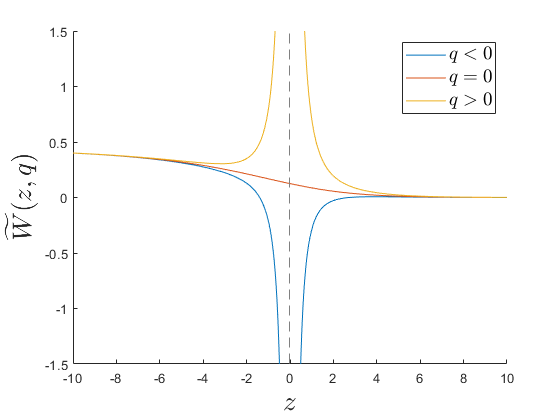} 
\end{figure}

\begin{remark}[Choice of modified P\'eclet number and fluxes]\label{rem:Peclet_mod}
	We note that both the discrete homogeneous and inhomogeneous fluxes \eqref{eq:HF_IBP_asymp} and \eqref{eq:IF_IBP_asymp}, respectively, contain two expressions. The main difference between the superscripts $+$ and $-$ comes from whether we express $\Lambda$ as a polynomial centred at $\sCoord=0$ or $\sCoord=1$. Upon choosing to express $\Lambda$ as a polynomial centred at $\sCoord=0$, we obtain homogeneous and inhomogeneous fluxes which depend on the P\'eclet number $\mathrm{Pe}_{j+1/2}^+$. On the other hand, by writing $\Lambda$ as a polynomial centred at $\sCoord=1$, the homogeneous and inhomogeneous fluxes depend on $\mathrm{Pe}_{j+1/2}^-$. Physically, taking the velocity from the upwind direction (as opposed to the downwind direction) is the correct approach, and thus the fluxes \eqref{eq:HF_IBP_pos_asymp} and \eqref{eq:IF_IBP_pos_asymp} are the appropriate choices when $V_{j+1/2}>0$, \eqref{eq:HF_IBP_neg_asymp} and \eqref{eq:IF_IBP_neg_asymp} otherwise. 
\end{remark}

\subsection{Application: coupling between the advection-diffusion equation and the Poisson equation for the velocity field}\label{sec:CF_coupled}
In this section, we relate the discussions in Sections \ref{sec:pwc}--\ref{sec:pwl} to the velocity field $V=-\frac{\rd \phy}{\rd x}$ reconstructed from the solution of the Poisson equation \eqref{eq:Poisson_1D}. Firstly, we note that the velocity field $V$ is an unknown quantity which needs to be computed from the Poisson equation \eqref{eq:Poisson_1D}, and hence applying quadrature rules to compute the integrals in \eqref{eq:HF_exact}--\eqref{eq:IF_exact} is no longer straightforward. In this section, we detail how the homogeneous and inhomogeneous fluxes in \eqref{eq:HF_exact}--\eqref{eq:IF_exact} are computed from discrete velocity fields. Since the complete flux scheme is a uniformly second-order method, a natural choice for solving the Poisson equation would be a second-order numerical scheme, e.g., the central difference method. This gives us the values $\phy_j$ of $\phy$ at the nodes $x_j, j=2,\dots, N$. We now relate the discrete velocity fields reconstructed from $\phy_j$ to the discrete fluxes \eqref{eq:HF_std}--\eqref{eq:IF_std} in Section \ref{sec:pwc}, and the modified homogeneous and inhomogeneous fluxes \eqref{eq:HF_IBP_asymp} and \eqref{eq:IF_IBP_asymp}, respectively, in Section \ref{sec:pwl}. 
\subsubsection{Piecewise constant velocity fields}\label{sec:pwc_coupled}
We start by relating the discrete velocity field to the fluxes \eqref{eq:HF_std}--\eqref{eq:IF_std}. One way to compute the velocity field $V$ at the interface $x_{j+1/2}$, is to use a second-order central difference and write
\begin{equation}\label{eq:V_ctr}
V_{j+1/2} = -\frac{\phy_{j+1}-\phy_j}{\Delta x},
\end{equation}
which upon substitution into \eqref{eq:flux_exact} gives us 
\begin{equation}\label{eq:flux_pwc}
\begin{aligned}
f_{j+1/2}
&=-\frac{D}{\Delta x} \frac{c_{j+1}e^{\frac{\mu}{D}\phy_\eh(1)}-c_{j}e^{\frac{\mu}{D}\phy_\eh(0)}}{\int_{0}^{1}e^{\frac{\mu}{D}\phy_\eh(\sCoord)} \mathrm{d}\sCoord}-\frac{\int_{0}^{1}e^{\frac{\mu}{D}\phy_\eh(\sCoord)}\hat{s}(\sCoord) \mathrm{d}\sCoord}{\int_{0}^{1}e^{\frac{\mu}{D}\phy_\eh(\sCoord)} \mathrm{d}\sCoord},
\end{aligned}
\end{equation}
where 
\begin{equation}\label{eq:sol_h}
\phy_\eh(\sCoord) = (\phy_{j+1}-\phy_j)\sCoord+\phy_j
\end{equation} is the solution to the homogeneous equation associated to the boundary value problem (in scaled coordinates) 
\begin{equation}\label{eq:BVPpoiss}
\begin{aligned}
&-\phy'' = \sP \Delta x^2 \quad \mathrm{on} \quad (0,1),\\
&\phy(0) = \phy_j, \quad \phy(1)=\phy_{j+1}.
\end{aligned}
\end{equation} 
Evaluating the expressions in \eqref{eq:flux_pwc} gives us the fluxes \eqref{eq:HF_std}--\eqref{eq:IF_std} with corresponding P\'eclet number 
\[\mathrm{Pe}_{j+1/2} = -\frac{\mu}{D}(\phy_{j+1}-\phy_{j}).
\] 
One interesting observation that can be made for the fluxes that arise from piecewise constant velocity fields is the fact that they only depend on the homogeneous solution $\phy_\eh$ of \eqref{eq:BVPpoiss}, which loses information about the particular solution $\phy_\mathrm{p}$, and consequently about the source term $\sP$ of the Poisson equation.
 
 \subsubsection{Piecewise linear velocity fields}\label{sec:pwl_coupled}
We now consider the case for piecewise linear velocity fields. As with the previous section, the constant component $V_{j+1/2}$ is computed via \eqref{eq:V_ctr}. Now, using $V=-\frac{\rd\phy}{\rd x}$ and the Poisson equation \eqref{eq:Poisson_1D}, we have that $\frac{\rd V}{\rd x}=\sP$. Classical finite volume type schemes approximate the source by piecewise constants on each cell $K_j$, with
\begin{equation}\label{eq:source_Poisson}
\sP = s_{\mathrm{P},j} \quad x\in K_j.
\end{equation}
Integrating the Poisson equation \eqref{eq:Poisson_1D} over a control volume $K_j$, and taking note that the flux is $V=-\frac{\rd \phy}{\rd x}$, we then see that
over each $K_j$, we have
\begin{equation}\label{eq:Poisson_FV}
V_{j+1/2}-V_{j-1/2} = s_{\mathrm{P},j} \Delta x .
\end{equation} 
In order to obtain an approximation for the derivative $\hV$, we consider two neighboring cells $K_j$ and $K_{j+1}$ which share the interface $x_{j+1/2}$. Adding the corresponding finite volume formulations \eqref{eq:Poisson_FV} then gives us 
\begin{equation}\nonumber
\begin{aligned}
V_{j+3/2}-V_{j-1/2} &= (s_{\mathrm{P},j}+s_{\mathrm{P},j+1}) \Delta x,
\end{aligned}
\end{equation}
or equivalently
\begin{equation}\label{eq:V'_j.5}
\frac{V_{j+3/2}-V_{j-1/2}}{2 \Delta x} = \frac{1}{2}(s_{\mathrm{P},j}+s_{\mathrm{P},j+1}):=s_{\mathrm{P},j+1/2}.
\end{equation}
We see on the left hand side of \eqref{eq:V'_j.5} a second-order central difference approximation of $\hV$, which, due to \eqref{eq:Poisson_FV}, is equivalent to $s_{\mathrm{P},j+1/2}$. Hence, the value of $\hV$ can simply be computed from the source term of the Poisson equation. We then use the expressions \eqref{eq:V_ctr} and \eqref{eq:V'_j.5} for $V_{j+1/2}$ and $\hV$ in the piecewise linear approximation \eqref{eq:V_pcwise_lin} for $V$, which upon substitution into \eqref{eq:flux_exact} gives us 
 \begin{equation}\label{eq:flux_pwl}
 \begin{aligned}
 f_{j+1/2}
 &=-\frac{D}{\Delta x}\frac{c_{j+1}e^{\frac{\mu}{D}\phy_\mathrm{C}(1)}-c_{j}e^{\frac{\mu}{D}\phy_\mathrm{C}(0)}}{\int_{0}^{1}e^{\frac{\mu}{D}\phy_\mathrm{C}(\sCoord)} \mathrm{d}\sCoord}-\frac{\int_{0}^{1}e^{\frac{\mu}{D}\phy_\mathrm{C}(\sCoord)}\hat{s}(\sCoord) \mathrm{d}\sCoord}{\int_{0}^{1}e^{\frac{\mu}{D}\phy_\mathrm{C}(\sCoord)} \mathrm{d}\sCoord},
 \end{aligned}
 \end{equation}
 where under the assumption that $\sP=s_{\mathrm{P},j+1/2}$ on $(x_j,x_{j+1})$, we have that $\phy_\mathrm{C} = \phy_\eh +\phy_\mathrm{p}$ is the complete solution to the boundary value problem \eqref{eq:BVPpoiss}, where $\phy_\mathrm{p}$ is given by
 \begin{equation}\nonumber
 \phy_\mathrm{p}(x) = s_{\mathrm{P},j+1/2}\frac{\Delta x^2}{2}(\sCoord-\sCoord^2).
 \end{equation}
 It is important to note that \eqref{eq:flux_pwl} offers a significant improvement over \eqref{eq:flux_pwc} in the sense that aside from the homogeneous solution $\phy_\eh$ to the boundary value problem \eqref{eq:BVPpoiss}, the discrete fluxes now also depend on the particular solution $\phy_{\mathrm{p}}$ to \eqref{eq:BVPpoiss}, and consequently, the source term $\sP$ of the Poisson equation. Depending on whether $\phy_\mathrm{p}$ is expressed as a polynomial centred at $\sCoord=0$ or $\sCoord=1$,  this leads to the fluxes \eqref{eq:HF_IBP_pos_asymp} and \eqref{eq:IF_IBP_pos_asymp}, or \eqref{eq:HF_IBP_neg_asymp} and \eqref{eq:IF_IBP_neg_asymp}, respectively. Here, the correction term $Q_{j+1/2}$ from \eqref{eq:Q} can be rewritten as 
 \[Q_{j+1/2}= \frac{\mu}{D} s_{\mathrm{P},j+1/2}\frac{\Delta x^2}{2}.\] 
  \section{Complete flux scheme in two dimensions} \label{sec:CF_2D}
  In this section, we discuss the complete flux scheme in two dimensions. For the discretisation, we denote by $\mesh$ a partition of the domain $\Omega$ into control volumes $K$ such that $\Omega = \bigcup_{K\in \mesh} K$. For each control volume $K$, we then denote by $\edges_K$ the collection of edges of $K$. In order to write the advection-diffusion equation \eqref{eq:adv-diff} in its finite volume form, we then take the integral over each control volume $K\in\mesh$ so that
  \[
  \int_K \nabla \bigcdot (\mu c \V - D \nabla c) \,\mathrm{d}A = \int_K s \,\mathrm{d}A.
  \]
  Defining the flux
  \[
  \mathbf{\Gamma}_c:= \mu c \V - D \nabla c
  \]
  and using the divergence theorem, we can then rewrite the equation above as
  \[
\sum_{\sigma\in\edges_K} \int_\sigma \mathbf{\Gamma}_c\cdot \mathbf{n}_{K,\sigma} \,\mathrm{d}s = \int_K s \,\mathrm{d}A,
\] 
where $\mathbf{n}_{K,\sigma}$ is the outward unit normal vector of $K$ at $\sigma$. In this paper, we consider rectangular meshes, and hence the outward unit normal vectors on the northern, southern, eastern and western edges are $\mathbf{e}_y, -\mathbf{e}_y, \mathbf{e}_x, -\mathbf{e}_x$ respectively, where $\mathbf{e}_x, \mathbf{e}_y$ are the standard basis vectors in $\mathbb{R}^2$. We also adopt the compass notation and denote by the subscripts $N,E,S,W$ the northern, eastern, southern, and western edges of cell $K$. We now discuss how to compute the flux along the eastern edge $\sigma_{E}$ of a control volume $K$. The fluxes along the other edges can be computed in a similar manner. 

Supposing that the control volumes $K, E$ have centres $(x_j,y_k)$ and $(x_{j+1},y_k)$, respectively, and a shared edge $\sigma_{E}$ described by the line segment $x=x_{j+1/2}, y\in(y_{k-1/2},y_{k+1/2})$ (see Figure \ref{fig:CF_2D}), we are required to compute
\[
\int_{\sigma_{E}} \flux_c \bigcdot \mathbf{e}_x \,\mathrm{d}s = \int_{y_{k-1/2}}^{y_{k+1/2}} \flux_c(x_{j+1/2},y) \bigcdot \mathbf{e}_x \,\mathrm{d}y.
\]
\begin{figure}
	\caption{Illustration of 2D Cartesian cell involved for computing fluxes.}\label{fig:CF_2D}
	\centering
	\includegraphics[width=0.45\linewidth]{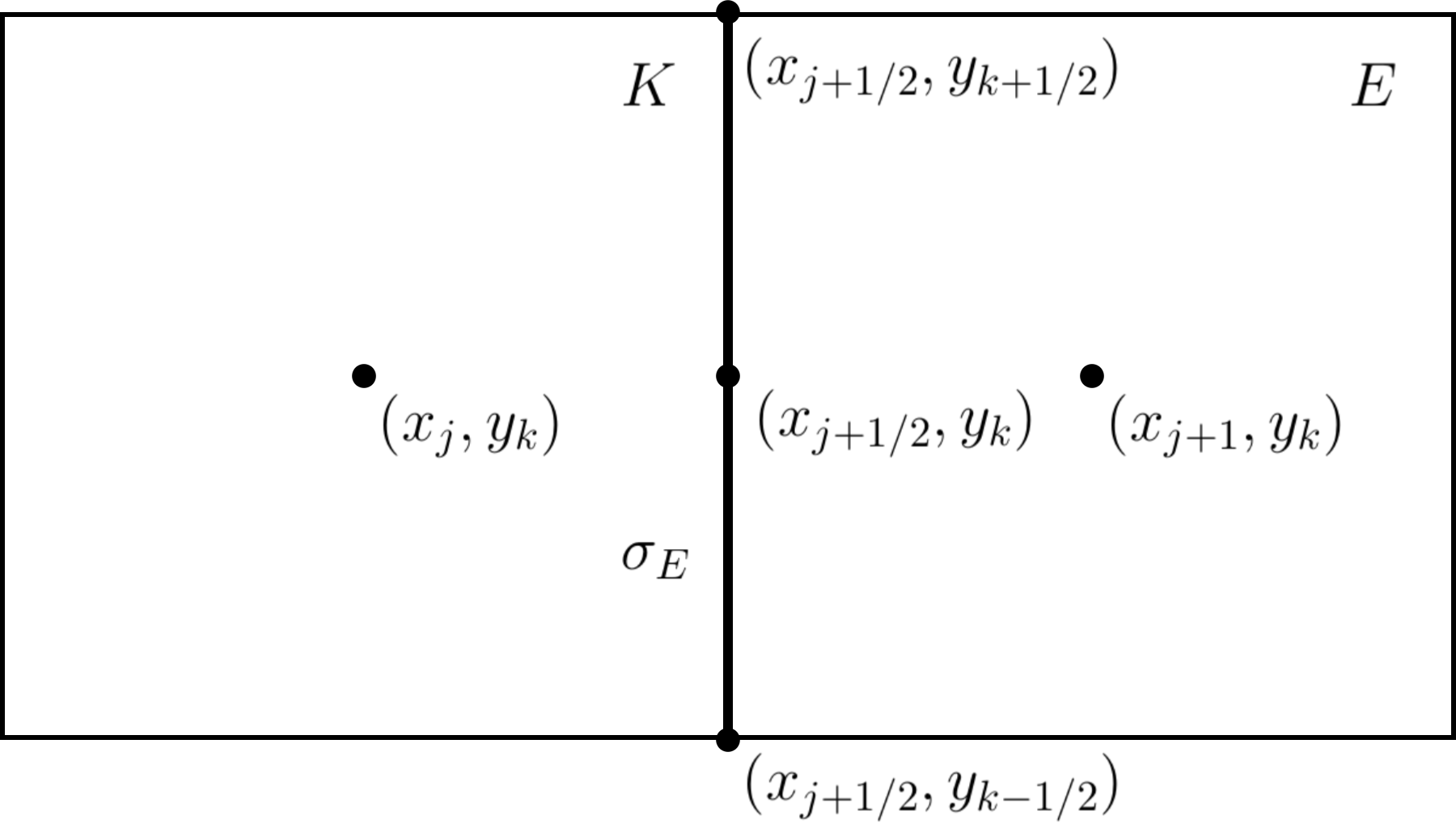}
\end{figure}
We use the midpoint rule and approximate the integral of the flux by
\[
\int_{y_{k-1/2}}^{y_{k+1/2}} \flux_c(x_{j+1/2},y) \bigcdot \mathbf{e}_x \,\mathrm{d}y \approx \Delta y \flux_c(x_{j+1/2},y_k) \bigcdot \mathbf{e}_x =: F_{K,\sigma_{E}}.
\]
We are then left to determine the value of the flux at $(x_{j+1/2},y_k)$. As with the 1D case, the flux will be computed via an associated boundary value problem on $y=y_k$, $x \in (x_j,x_{j+1})$, which is given by 
\begin{equation}\nonumber
\begin{aligned}
\frac{\partial }{\partial x}\big(\flux_c \bigcdot \mathbf{e}_x \big) =& s -  \frac{\partial }{\partial y}\big(\flux_c \bigcdot \mathbf{e}_y \big) =: \tilde{s}, \quad x\in(x_j,x_{j+1}), \,\,\,y=y_k,\\
c(x_{j},y_k) = c_{j,k}, &\quad  c(x_{j+1},y_k) = c_{j+1,k}. 
\end{aligned}
\end{equation}
By treating the partial differential equation as a quasi-one-dimensional ODE, i.e., treating $y$ as a constant, the expression for $\flux_c \cdot \mathbf{e}_x $ can then be obtained by the same process as the one described in Section \ref{sec:CF_1D}. In particular, for piecewise constant velocity fields, we still have 
\begin{subequations} \label{eq:fluxes_std_2D}
\begin{align} 
F_{K,\sigma_E}^\eh &= -\frac{D\Delta y}{\Delta x} \big( B(\mathrm{Pe}_{j+1/2,k})c_{j+1,k}-B(-\mathrm{Pe}_{j+1/2,k})c_{j,k}\big), \label{eq:HF_std_2D}\\
F_{K,\sigma_E}^\ei &= -\Delta y \Delta x (W(\mathrm{Pe}_{j+1/2,k})\tilde{s}_{j+1,k}^E- W(-\mathrm{Pe}_{j+1/2,k})\tilde{s}_{j,k}^E) \label{eq:IF_std_2D},
\end{align} 
\end{subequations}
where $F_{K,\sigma_E}^\eh, F_{K,\sigma_E}^\ei$ are the homogeneous and inhomogeneous components of the integrated flux, respectively. Compared to the complete flux scheme in one dimension \eqref{eq:HF_std}--\eqref{eq:IF_std}, there are only two main changes. Firstly, since the velocity field $\Vel$ is now a vector, the definition of the P\'eclet number \eqref{eq:Pec_std} needs to be modified so that we have 
\begin{equation}\label{eq:Pec_std_2D}
\mathrm{Pe}_{j+1/2,k} = \frac{\mu}{D} \Vel_{j+1/2,k} \bigcdot \mathbf{e}_x \Delta x.
\end{equation} We note that even though the numerical scheme was presented under the assumption that the diffusion parameter $D$ is a scalar, an extension to the case where $D$ is an anisotropic diffusion tensor is possible by following the ideas in \cite{CT20-complete_flux}. Secondly, aside from the source term $s$, we now also have the term  $\frac{\partial}{\partial y}\big(\flux_c \bigcdot \mathbf{e}_y \big)$, which, following the nomenclature in \cite{AB11-FVCF}, we refer to as a \emph{cross flux}, on the right hand side of the differential equation. Hence, $\tilde{s}_{j,k}^E, \tilde{s}_{j+1,k}^E$ refer to piecewise constant approximations of the total source $\tilde{s}= s - \frac{\partial }{\partial y}\big(\flux_c \cdot \mathbf{e}_y \big)$ over the regions $(x_{j},x_{j+1/2})\times(y_{k-1/2},y_{k+1/2})$ and $(x_{j+1/2},x_{j+1})\times(y_{k-1/2},y_{k+1/2})$, respectively.  For $s$, we may simply take the average value of the source terms $s_{j,k}$ and $s_{j+1,k}$ in cells $K$ and $E$, respectively. The main challenge is then to find an approximation for the cross flux $\frac{\partial}{\partial y}\big(\flux_c \bigcdot \mathbf{e}_y \big)$. For $x\in(x_j,x_{j+1/2})$, we use the average value of the cross flux over the region $(x_{j},x_{j+1/2})\times(y_{k-1/2},y_{k+1/2})$, given by
\begin{equation}\nonumber
\begin{aligned} 
\frac{2}{\Delta x \Delta y}
&\int_{x_j}^{x_{j+1/2}}  \int_{y_{k-1/2}}^{y_{k+1/2}}\frac{\partial}{\partial y}\bigg(\mathbf{\Gamma}_c \bigcdot \mathbf{e}_y \bigg) \mathrm{d}y\mathrm{d}x \\ 
=& \frac{2}{\Delta x \Delta y} \bigg(\int_{x_j}^{x_{j+1/2}} \mathbf{\Gamma}_c(x,y_{k+1/2})\bigcdot \mathbf{e}_y\, \mathrm{d}x - \int_{x_j}^{x_{j+1/2}} \mathbf{\Gamma}_c(x,y_{k-1/2})\bigcdot \mathbf{e}_y \, \mathrm{d}x \bigg).
\end{aligned} 
\end{equation}
Here, we notice that the terms on the right hand side are integrated fluxes along the northern and southern edges of $K$, respectively. Since the approximation for $s$ is piecewise constant, it is sufficient to use a first-order approximation for the average value of the flux $\mathbf{\Gamma}_c \bigcdot \mathbf{e}_y$. This can be done by taking the homogeneous component of the fluxes along the northern and southern edges, which yields the following discrete approximation for the average value of the cross flux on $(x_{j},x_{j+1/2})$
\begin{figure}
	\caption{Fluxes along the edges of a cell $K$.}\label{fig:fluxes_2D}
	\centering
	\includegraphics[width=0.45\linewidth]{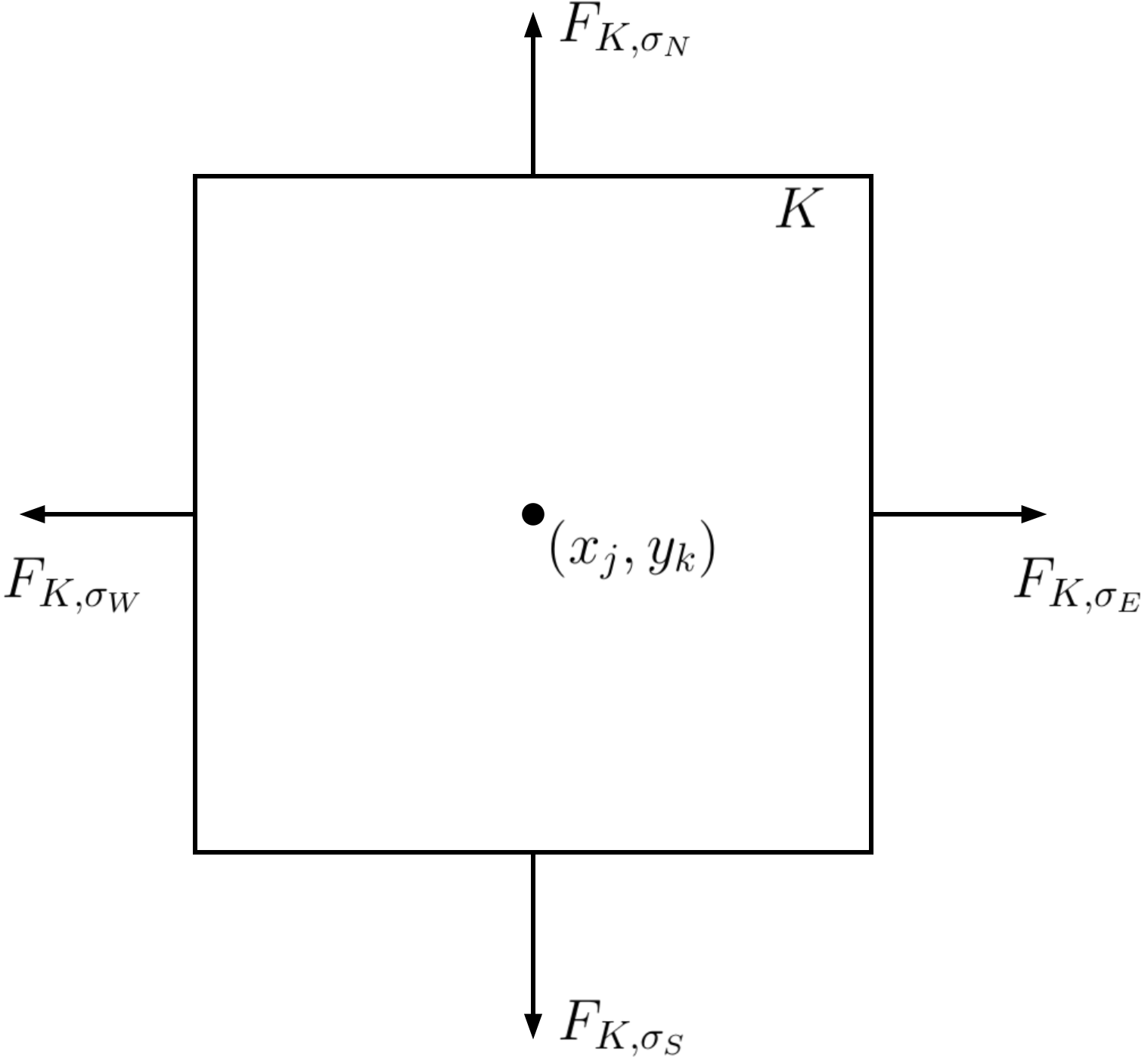}
\end{figure}
\[
\frac{\partial}{\partial y} \big(\Gamma_c\bigcdot \mathbf{e}_y\big) \approx \frac{1}{\Delta x \Delta y} (F_{K,\sigma_{N}}^\eh+F_{K,\sigma_{S}}^\eh),
\]
where $F_{K,\sigma_N}^\eh$ and $F_{K,\sigma_S}^\eh$ denote the homogeneous component of the integrated fluxes along the northern and southern edges of cell $K$, respectively (see Figure \ref{fig:fluxes_2D}). By a similar approach, the average value of the cross flux at $(x_{j+1/2},x_{j+1})$ is approximated by 
\[
\frac{1}{\Delta x \Delta y} (F_{E,\sigma_{N}}^\eh+F_{E,\sigma_{S}}^\eh),
\]
where $F_{E,\sigma_N}^\eh$, $F_{E,\sigma_S}^\eh$ are the homogeneous components of the integrated fluxes along the northern and southern edges of cell $E$, respectively. To summarise, we have 
\begin{subequations}
\begin{align}
\tilde{s}_{j,k}^E &= s_{j,k} - \frac{1}{\Delta x \Delta y} (F_{K,\sigma_{N}}^\eh+F_{K,\sigma_{S}}^\eh),\label{eq:totsrc_j} \\
\tilde{s}_{j+1,k}^E &= s_{j+1,k} - \frac{1}{\Delta x \Delta y} (F_{E,\sigma_{N}}^\eh+F_{E,\sigma_{S}}^\eh).\label{eq:totsrc_j1}
\end{align}
\end{subequations}
We now aim to  extend the complete flux scheme for piecewise linear velocity fields in Section \ref{sec:pwl} to 2D. In order to do so, we require an expression for the correction term $Q_{j+1/2}$ in 2D; cf. \eqref{eq:Q}. By denoting $\Vel = (V_1, V_2)^T$, we can write
\begin{equation} \label{eq:Q_2D}
Q_{j+1/2,k} = \frac{\mu}{D}\frac{\partial}{\partial x} V_1(x_{j+1/2},y_k) \frac{\Delta x^2}{2}.
\end{equation}
In two dimensions, the upwind-adjusted P\'eclet numbers \eqref{eq:Peclet_pos}--\eqref{eq:Peclet_neg} can then be written as 
\begin{equation}\nonumber
\begin{aligned}
\mathrm{Pe}^+_{j+1/2,k} &= \mathrm{Pe}_{j+1/2,k} -\alpha_{j,k}Q_{j+1/2,k}, \\
\mathrm{Pe}^-_{j+1/2,k} &= \mathrm{Pe}_{j+1/2,k} + \alpha_{j,k} Q_{j+1/2,k},
\end{aligned}
\end{equation}
$\alpha_{j,k}\in[0,1]$. Here, $\alpha_{j,k}$ is chosen in a similar manner as in \eqref{eq:alpha_val} so that there is no over-correction. Having defined the correction term and the upwind-adjusted P\'eclet numbers, we can now write the integrated discrete homogeneous fluxes as
\begin{subequations} \label{eq:fluxes_2D_HF}
\begin{align} 
F_{K,\sigma_E}^{\eh,+} &= -\frac{D\Delta y}{\Delta x} \big( e^{- \alpha_{j,k}Q_{j+1/2,k}}B(\mathrm{Pe}^+_{j+1/2,k})c_{j+1,k}-B(-\mathrm{Pe}^+_{j+1/2,k})c_{j,k}\big), \label{eq:HF_pwl_2D_pos}\\
F_{K,\sigma_E}^{\eh,-} &= -\frac{D\Delta y}{\Delta x} \big( B(\mathrm{Pe}^-_{j+1/2,k})c_{j+1,k}-e^{ -\alpha_{j,k}Q_{j+1/2,k}}B(-\mathrm{Pe}^-_{j+1/2,k})c_{j,k}\big), \label{eq:HF_pwl_2D_neg}
\end{align} 
\end{subequations}
which correspond to \eqref{eq:HF_IBP_asymp}. Similarly, the integrated discrete inhomogeneous fluxes corresponding to \eqref{eq:IF_IBP_asymp} are given by
\begin{subequations} \label{eq:fluxes_2D_IF}
	\begin{align} 
 F_{K,\sigma_E}^{\ei,+} &= -\Delta y \Delta x \bigg(\widetilde{W}(\mathrm{Pe}^+_{j+1/2,k},-\frac{3}{4}\alpha_{j,k}Q_{j+1/2,k})\tilde{s}_{j+1,k}^E \nonumber\\
&\,\,\,\qquad\qquad\qquad- \widetilde{W}(-\mathrm{Pe}^+_{j+1/2,k},\frac{1}{4}\alpha_{j,k}Q_{j+1/2,k})\tilde{s}_{j,k}^E\bigg), \label{eq:IF_pwl_2D_pos}
	\\
	F_{K,\sigma_E}^{\ei,-} &= -\Delta y \Delta x \bigg(\widetilde{W}(\mathrm{Pe}^-_{j+1/2,k},-\frac{1}{4}\alpha_{j,k}Q_{j+1/2,k})\tilde{s}_{j+1,k}^E\nonumber\\
	&\,\,\,\qquad\qquad\qquad-  \widetilde{W}(-\mathrm{Pe}^-_{j+1/2,k},-\frac{5}{4}\alpha_{j,k}Q_{j+1/2,k})\tilde{s}_{j,k}^E\bigg). \label{eq:IF_pwl_2D_neg}
	\end{align} 
\end{subequations} 

We note that for the inhomogeneous component of the integrated fluxes \eqref{eq:IF_pwl_2D_pos} and  \eqref{eq:IF_pwl_2D_neg}, the total source \eqref{eq:totsrc_j}--\eqref{eq:totsrc_j1} uses the corresponding homogeneous components along the northern and southern edges which are similar to \eqref{eq:HF_pwl_2D_pos} and \eqref{eq:HF_pwl_2D_neg}, respectively. As with the discussion in Section \ref{sec:CF_coupled}, we see that the upwind homogeneous and inhomogeneous fluxes \eqref{eq:fluxes_2D_HF} and \eqref{eq:fluxes_2D_IF}, respectively,  now take into account the first-order derivatives of the velocity field $\V$, and are thus natural improvements over the fluxes \eqref{eq:HF_std_2D}--\eqref{eq:IF_std_2D}. Moreover, following Remark \ref{rem:Peclet_mod}, we see that the fluxes \eqref{eq:HF_pwl_2D_pos} and \eqref{eq:IF_pwl_2D_pos} are the proper choices whenever $\mathrm{Pe}_{j+1/2,k}>0$, \eqref{eq:HF_pwl_2D_neg} and \eqref{eq:IF_pwl_2D_neg} otherwise.
 
\section{Numerical tests} \label{sec:NumTests}
In this section, we perform numerical tests for the advection-diffusion equation \eqref{eq:adv-diff}, coupled to the Poisson equation \eqref{eq:Poisson} for the velocity $\Vel$. For these tests, we prescribe a solution $c^*$ and calculate the source term $s$ for equation \eqref{eq:adv-diff} accordingly. Dirichlet boundary conditions are then imposed. For the Poisson equation, a second-order central difference scheme is used to numerically calculate $\phy$, which is then used to construct $\Vel$. For a given solution $c^*$ of the advection-diffusion equation \eqref{eq:adv-diff}, we measure the relative error in the $L^2$-norm by computing
\begin{equation}\nonumber
\norm{E}{2} := \frac{\norm{c^*-c}{2}}{\norm{c^*}{2}},
\end{equation}
where $c$ is the piecewise constant function reconstructed from the discrete solution. That is, $c=c_K$ for each $x \in K, K\in\mesh$. For all of the tests considered below, choosing $\alpha_j=1$ for all $j$ satisfies the requirement \eqref{eq:alpha_val}.

\subsection{1D tests}
We start by performing numerical tests in one dimension on the domain $\Omega=(0,1)$ with $N$ equidistant cells, i.e., $\Delta x = \frac{1}{N}$. Here, the velocity $V$ and its derivative $V_x$ are reconstructed as in \eqref{eq:V_ctr} and \eqref{eq:V'_j.5}, respectively.  
\subsubsection{Test case 1}
We start by considering a numerical test from \cite[Section 8.1]{AB11-FVCF}, where we take $V=1-0.95\sin(\pi x)$, and 
\begin{equation}\nonumber
c^*(x) = 0.2\sin(\pi x) + \frac{e^{(x-1)/D}-e^{-1/D}}{1-e^{-1/D}}.
\end{equation}
For this test case, $V>0$ and hence \eqref{eq:HF_IBP_pos_asymp}  and \eqref{eq:IF_IBP_pos_asymp} are upwind fluxes whereas \eqref{eq:HF_IBP_neg_asymp} and \eqref{eq:IF_IBP_neg_asymp} are downwind fluxes. In the numerical results presented below, $V$ and $V_x$ are reconstructed from the discrete solution of the Poisson equation $-\frac{\rd^2\phy}{\rd x^2} = -0.95\pi\cos(\pi x)$ with appropriate Dirichlet boundary conditions. We consider first the case $D=1$, corresponding to a test with dominant diffusion.  
\begin{table}[h!]
	\caption{Relative errors in the solution profile, test case 1, $D=1$, complete flux scheme.}\label{tab:test1.diff_dom}
	\centering
	\begin{tabular}{c|c c|c c| c c}
		\hline
		$N$ &  \multicolumn{2}{|c|}{flux \eqref{eq:fluxes_std} }  & \multicolumn{2}{|c|}{ upwind \eqref{eq:HF_IBP_pos_asymp} \& \eqref{eq:IF_IBP_pos_asymp}} & \multicolumn{2}{|c}{ downwind \eqref{eq:HF_IBP_neg_asymp} \& \eqref{eq:IF_IBP_neg_asymp} }   \\
		& $\norm{E}{2}$ & order & 
		$\norm{E}{2}
		$ & order & $\norm{E}{2}$ & order\\
		\hline
		40 & 1.3957e-4 & - & 2.5960e-5 & - & 2.5739e-4 & - \\
		80 & 3.5942e-5 & 1.9573 & 6.5651e-6 & 1.9834 & 6.6507e-5 & 1.9524 \\
		160 & 9.1268e-6 & 1.9775 & 1.6536e-6 & 1.9892 &1.6913e-5 &1.9754 \\
		320 & 2.3001e-6 & 1.9884 & 4.1518e-7 & 1.9938 & 4.2650e-6 & 1.9875 \\
		640 & 5.7738e-7 & 1.9941 & 1.0404e-7 & 1.9966 & 1.0709e-6 & 1.9937 \\
		1280 & 1.4464e-7 & 1.9970 & 2.6038e-8 & 1.9984 & 2.6883e-7 & 1.9968 \\
		\hline
	\end{tabular} 
\end{table}

In Table \ref{tab:test1.diff_dom}, we see that in the diffusion dominated regime, the numerical results obtained from all of the complete fluxes: piecewise constant velocity \eqref{eq:fluxes_std},  upwind \eqref{eq:HF_IBP_pos_asymp} and \eqref{eq:IF_IBP_pos_asymp}, and downwind \eqref{eq:HF_IBP_neg_asymp} and \eqref{eq:IF_IBP_neg_asymp} exhibit second-order accuracy. We note, however, that the downwind fluxes yielded slightly less accurate results. We now consider an advection dominated test case by taking $D=10^{-8}$. In particular, this test is interesting because for $D\ll1$, $c^*$ has a very thin boundary layer near $x=1$. 

\begin{table}[h]
	\caption{Relative errors in the solution profile, test case 1, $D=10^{-8}$, complete flux scheme.}\label{tab:test1.adv_dom}
	\centering
	\begin{tabular}{c|c c|c c| c c}
		\hline
		$N$ &  \multicolumn{2}{|c|}{flux \eqref{eq:fluxes_std} } & \multicolumn{2}{|c|}{ upwind \eqref{eq:HF_IBP_pos_asymp} \& \eqref{eq:IF_IBP_pos_asymp}} & \multicolumn{2}{|c}{ downwind \eqref{eq:HF_IBP_neg_asymp} \& \eqref{eq:IF_IBP_neg_asymp} } \\
		& $\norm{E}{2}$ & order & 
		$\norm{E}{2}
		$ & order & $\norm{E}{2}$ & order\\
		\hline
		40 & 6.5971e-2 & - & 2.5940e-2 & - & 1.2146e-1 & -\\
		80 & 3.6815e-2 & 0.8415 & 7.6406e-3 & 1.7634 & 7.2024e-2 & 0.7539 \\
		160 & 2.0240e-2 & 0.8630 & 2.1286e-3 & 1.8437 &4.0256e-2 &0.8393 \\
		320 & 1.0765e-2 & 0.9109 & 5.6762e-4 & 1.9070 & 2.1500e-2 & 0.9048\\
		640 & 5.5758e-3 & 0.9491 & 1.4706e-4 & 1.9485 & 1.1484e-2 &   0.9475 \\
		1280 & 2.8408e-3 & 0.9729 & 3.7470e-5 & 1.9726 & 5.6819e-3 &   0.9724 \\
		\hline
	\end{tabular} 
\end{table}

Table \ref{tab:test1.adv_dom} shows us that in the advection-dominated regime, the numerical solution obtained from a piecewise constant approximation of the velocity, which corresponds to the numerical fluxes \eqref{eq:fluxes_std}, is now only first-order accurate. We also note that the numerical solution obtained from the downwind fluxes  \eqref{eq:HF_IBP_neg_asymp} and \eqref{eq:IF_IBP_neg_asymp}, is also first-order accurate. On the other hand, the numerical solution obtained from the upwind fluxes  \eqref{eq:HF_IBP_pos_asymp} and \eqref{eq:IF_IBP_pos_asymp} results in a second-order accurate approximation of the solution. This agrees with the observation made in Remark \ref{rem:Peclet_mod} that the proper choice for the modified P\'eclet number and fluxes should be taken from the upwind direction.

\subsubsection{Test case 2}
For our second test case, we consider an advection-dominated problem by taking $D=10^{-8}$, and suppose that the exact solution for the advection-diffusion equation is given by 
\begin{equation}\nonumber
c^*(x)=\sin(\pi x).
\end{equation} Moreover, we work with an unknown velocity $V$ such that the source term of the Poisson equation is given by
\[
\sP = -A (e^{-1000x^2}-e^{-1000(1-x)^2}).
\] We then impose Dirichlet boundary conditions $\phy(1)=0, \phy(0)=-300$. Here, we observe that $\left \lvert \frac{\rd V}{\rd x} \right \rvert \ll 1 $ in most of the domain, except near the boundaries where $\left \lvert \frac{\rd V}{\rd x} \right \rvert \gg 1 $. This aims to mimic what is commonly encountered in plasma physics or porous media applications, for which $\frac{\rd V}{\rd x} = 0$ almost everywhere, except for a very small part of the domain; the regions near the boundary correspond to sources or sinks. For this test, $V<0$ and hence the upwind fluxes are given by \eqref{eq:HF_IBP_neg_asymp} and \eqref{eq:IF_IBP_neg_asymp}. We start by considering $A=10$, which corresponds to a moderately steep source term $\sP$. 

\begin{table}[h]
	\caption{Relative errors in the solution profile, test case 2, $A=10$, complete flux scheme.}\label{tab:Psrc_A10}
	\centering
	\begin{tabular}{c|c c|c c| c c}
		\hline
	$N$ &  \multicolumn{2}{|c|}{flux \eqref{eq:fluxes_std} }  & \multicolumn{2}{|c|}{ downwind \eqref{eq:HF_IBP_pos_asymp} \& \eqref{eq:IF_IBP_pos_asymp}} & \multicolumn{2}{|c}{ upwind \eqref{eq:HF_IBP_neg_asymp} \& \eqref{eq:IF_IBP_neg_asymp} }   \\
		& $\norm{E}{2}$ & order & 
		$\norm{E}{2}
		$ & order & $\norm{E}{2}$ & order\\
		\hline
		40 & 6.0872e-4 & - & 6.0775e-4 & - & 6.0977e-4 & - \\
		80 & 1.3097e-4 & 2.2166 & 1.3096e-4 & 2.2143 & 1.3105e-4 & 2.2181 \\
		160 & 3.0417e-5 & 2.1063 & 3.0512e-5 & 2.1017 &3.0393e-5 &2.1083 \\
		320 & 7.3620e-6 & 2.0467 & 7.4681e-6 & 2.0306 & 7.3275e-6 &  2.0524 \\
		640 & 1.8357e-6 & 2.0038 & 1.9405e-6 & 1.9443 & 1.7995e-6 & 2.0257 \\
		1280 & 4.8153e-7 & 1.9306 & 5.7534e-6 & 1.7540 & 4.4594e-7 & 2.0127 \\
		\hline
	\end{tabular} 
\end{table}

In Table \ref{tab:Psrc_A10}, we see that when $A=10$, all three numerical fluxes exhibit second-order accuracy, and the results are very close to each other. To further understand this behaviour, we look at Figure \ref{fig:V_src_A10}. Here, we see that due to the fact that the source term of the Poisson equation is not too steep, the velocity field $V \in (-300,-299.7)$ is almost constant, and hence using a piecewise constant approximation of $V$, which gives the fluxes \eqref{eq:fluxes_std}, is sufficient to obtain a second-order scheme. Moreover, due to the fact that $A=10$, the contribution of $Q_{j+1/2}$ to the modified P\'eclet number is minimal; hence resulting in numerical solutions which are very close to each other.
\begin{figure}
	\caption{Velocity and source term of Poisson equation, $A=10$ (left: $V$; right: $\sP$).}\label{fig:V_src_A10}
	\begin{tabular}{cc}
		\includegraphics[width=0.5\linewidth]{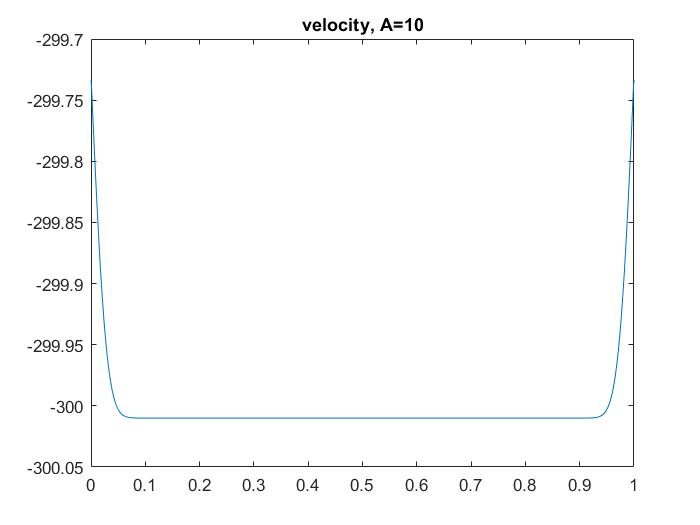} & \includegraphics[width=0.5\linewidth]{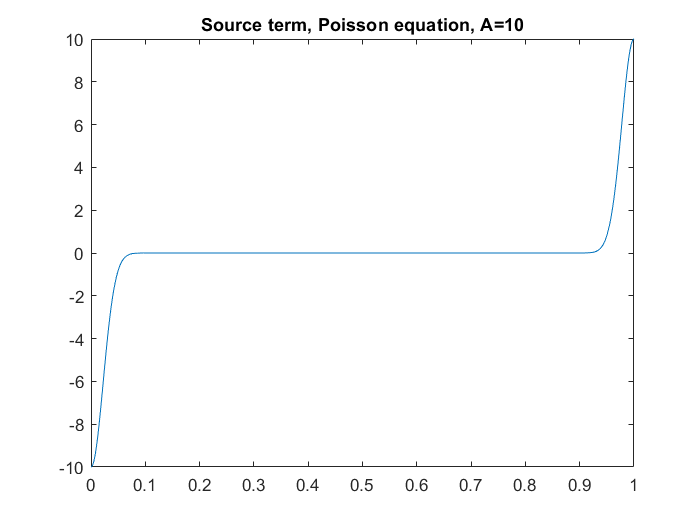} 
	\end{tabular}
\end{figure}

To further test the scheme, we consider a very steep source term, by taking $A=1000$. We now observe in Table \ref{tab:Psrc_A1000} that the numerical solutions obtained by using the fluxes \eqref{eq:fluxes_std}, and \eqref{eq:HF_IBP_pos_asymp} and \eqref{eq:IF_IBP_pos_asymp}, are only first-order accurate, whereas using the upwind fluxes \eqref{eq:HF_IBP_neg_asymp} and \eqref{eq:IF_IBP_neg_asymp} gives solutions which are second-order accurate. This is due to the fact that with a steeper source term, the velocity $V$ is no longer almost constant, as can be seen in Figure \ref{fig:V_src_A1000}. Moreover, since $A=1000$, the contribution of the correction term $Q_{j+1/2}$ is no longer negligible, and hence, making an adjustment in the incorrect direction and taking the downwind fluxes only gives a first-order accurate solution. 
\begin{table}[h]
	\caption{Relative errors in the solution profile, test case 2, $A=1000$, complete flux scheme.}\label{tab:Psrc_A1000}
		\centering
		\begin{tabular}{c|c c|c c| c c}
			\hline
		$N$ &  \multicolumn{2}{|c|}{flux \eqref{eq:fluxes_std} }  & \multicolumn{2}{|c|}{ downwind \eqref{eq:HF_IBP_pos_asymp} \& \eqref{eq:IF_IBP_pos_asymp}} & \multicolumn{2}{|c}{ upwind \eqref{eq:HF_IBP_neg_asymp} \& \eqref{eq:IF_IBP_neg_asymp} }  \\
			 & $\norm{E}{2}$ & order & 
			$\norm{E}{2}
			$ & order & $\norm{E}{2}$ & order\\
			\hline
			40 & 3.1242e-3 & - & 3.4986e-3 & - & 2.9395e-3 & - \\
			80 & 1.4130e-3 & 1.1447 & 1.5646e-3 & 1.1610 & 1.3323e-3 & 1.1417 \\
			160 & 4.2599e-4 & 1.7299 & 5.1057e-4 & 1.6157 &3.8726e-4 &1.7826 \\
			320 & 1.2728e-5 & 1.7429 & 1.8336e-4 & 1.4775 & 1.0047e-4 & 1.9465 \\
			640 & 4.5457e-5 & 1.4854 & 7.9277e-5 & 1.2097 & 2.5354e-5 & 1.9865 \\
			1280 & 1.9761e-5 & 1.2018 & 3.7918e-5 & 1.0640 & 6.3543e-6 & 1.9964 \\
			\hline
		\end{tabular} 
\end{table}

\begin{figure}
	\caption{Velocity and source term of Poisson equation, $A=1000$ (left: $V$; right: $\sP$).}\label{fig:V_src_A1000}
	\begin{tabular}{cc}
		\includegraphics[width=0.5\linewidth]{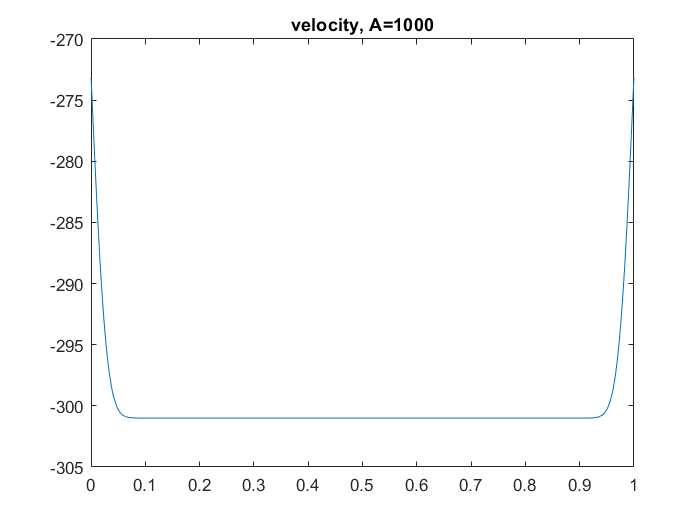} & \includegraphics[width=0.5\linewidth]{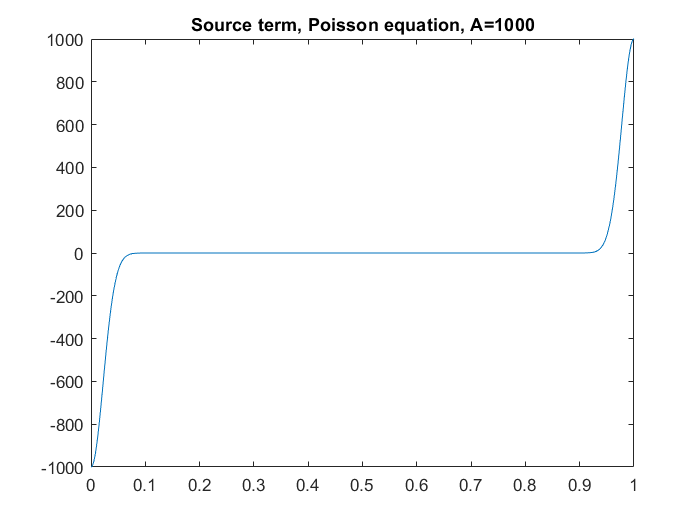} 
	\end{tabular}
\end{figure}

\subsection{2D tests} In this section, we perform 2D tests on the domain $\Omega=(0,1)\times(0,1)$ with Cartesian meshes consisting of $N\times N$ square cells. We note that since we consider Cartesian meshes, the P\'eclet number \eqref{eq:Pec_std_2D} only requires one component of the velocity field for each edge. To be specific, writing $\Vel = (V_1,V_2)^T$, we have
\[
\Vel_{j+1/2,k} \cdot \mathbf{e}_x = V_1(x_{j+1/2},y_k),
\] 
which can be approximated via central differences by
\[
V_1(x_{j+1/2},y_k) \approx -\frac{\phy_{j+1,k}-\phy_{j,k}}{\Delta x}.
\]
For the homogeneous and inhomogeneous fluxes \eqref{eq:fluxes_2D_HF}--\eqref{eq:fluxes_2D_IF}, an approach similar to \eqref{eq:V'_j.5} is then used to approximate the correction terms \eqref{eq:Q_2D}. 
\subsubsection{Test case 3}
We now consider a test case with 
\[
c^*(x,y) = \sin(\pi x) \sin(\pi y)
\]
and a velocity field $\Vel$ that is determined by the following solution to the Poisson equation 
\[
\phy(x,y) = \sin(\pi x) \sin(\pi y) + \sin(2\pi x) \sin(2\pi y)+9x+9y.
\]
This describes a flow that moves from the upper right region towards the lower left region of the domain (see Figure \ref{fig:vel_streamlines}). Since $V_1, V_2<0$ for this test case, the relevant upwind fluxes are \eqref{eq:HF_pwl_2D_neg} and \eqref{eq:IF_pwl_2D_neg}. 
\begin{figure}
	\caption{Streamlines of the velocity field $\Vel$, test case 3.}\label{fig:vel_streamlines}
	\centering
	\includegraphics[width=0.45\linewidth]{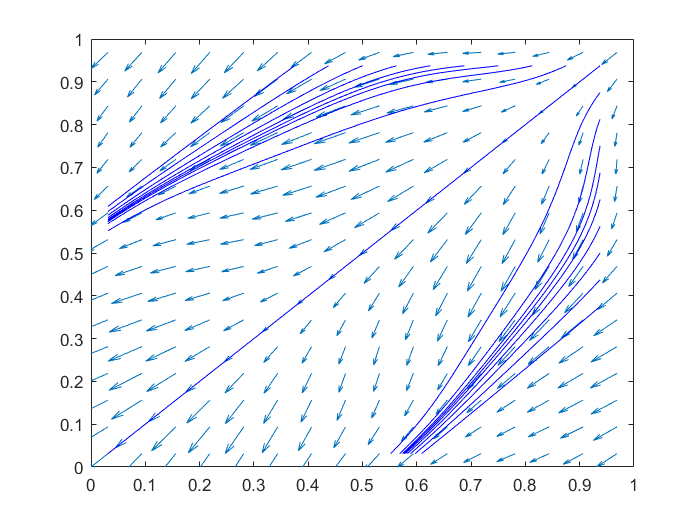}
\end{figure}

We start by considering a diffusion dominated regime by taking $D=1$. Upon looking at Table \ref{tab:test3_diff}, we see that, as in the test cases in one dimension, using the complete flux scheme with a piecewise constant approximation of the velocity field already gives a good enough approximation to the solution, with second-order accuracy. In this case, the correction term \eqref{eq:Q_2D} for the velocity field only offers a slight improvement in the accuracy of the numerical solution. On the other hand, when considering an advection-dominated regime, by taking $D=10^{-8}$, we see in Table \ref{tab:test3_adv} that using the modified fluxes in the proper upwind direction, in this case \eqref{eq:HF_pwl_2D_neg} and \eqref{eq:IF_pwl_2D_neg}, helps ensure that the complete flux scheme remains second-order accurate. We remark that for other types of velocity fields, the proper upwind fluxes might be a combination of \eqref{eq:HF_pwl_2D_pos} and \eqref{eq:IF_pwl_2D_pos}, with \eqref{eq:HF_pwl_2D_neg} and \eqref{eq:IF_pwl_2D_neg}, depending on the sign(s) of $\Vel \bigcdot \mathbf{e}_x$ and $\Vel \bigcdot \mathbf{e}_y$. 
	\begin{table}[h]
	\caption{Relative errors in the solution profile, test case 3, $D=1$.} \label{tab:test3_diff}
		\centering
\begin{tabular}{c|c c|c c}
	\hline
	$N$ &  \multicolumn{2}{|c|}{flux \eqref{eq:fluxes_std_2D} }  & \multicolumn{2}{|c}{ upwind flux \eqref{eq:HF_pwl_2D_neg} \& \eqref{eq:IF_pwl_2D_neg} }    \\
	& $\norm{E}{2}$ & order & 
	$\norm{E}{2}
	$ & order  \\
	\hline
		16$\times$16 & 1.4674e-2 & - & 1.1534e-2 & - \\
		32$\times$32 & 3.9639e-3 & 1.8883 & 3.2077e-3 & 1.8463 \\
		64$\times$64 & 1.0434e-3 & 1.9256 & 8.6021e-4 & 1.8988 \\
		128$\times$128 & 2.6859e-4 & 1.9579 & 2.2366e-4 & 1.9434 \\
		256$\times$256 & 6.8194e-5 & 1.9777 & 5.7080e-5 & 1.9702 \\
		\hline
	\end{tabular} 
\end{table}

	\begin{table}[h]
	\caption{Relative errors in the solution profile, test case 3, $D=10^{-8}$.} \label{tab:test3_adv}
	\centering
	\begin{tabular}{c|c c|c c}
		\hline
		$N$ &  \multicolumn{2}{|c|}{flux \eqref{eq:fluxes_std_2D} }  & \multicolumn{2}{|c}{ upwind flux \eqref{eq:HF_pwl_2D_neg} \& \eqref{eq:IF_pwl_2D_neg}  }    \\
		& $\norm{E}{2}$ & order & 
		$\norm{E}{2}
		$ & order  \\
		\hline
		16$\times$16 & 1.1226e-1 & - & 1.1204e-1 & - \\
	32$\times$32 & 3.9987e-2 & 1.4894 & 3.1240e-2 & 1.8426 \\
	64$\times$64 & 1.7269e-2 & 1.2113 & 8.3005e-3 & 1.9122 \\
	128$\times$128 & 8.3230e-3 & 1.0531 & 2.1406e-3 & 1.9552 \\
	256$\times$256 & 4.1431e-3 & 1.0064 & 5.4360e-4 & 1.9774 \\
		\hline
	\end{tabular} 
\end{table}
\section{Summary and future work} \label{sec:Conc}
In this paper, we considered a finite volume complete flux scheme for an advection-diffusion equation, coupled to a Poisson equation for the velocity field. By using a piecewise constant approximation of the velocity field, we obtained the classic homogeneous and inhomogeneous fluxes \eqref{eq:fluxes_std} in 1D, and \eqref{eq:fluxes_std_2D} in 2D. These fluxes resulted in a numerical scheme which is second-order accurate in the diffusion-dominated regime. However, in the advection-dominated regime, if the velocity field is nonconstant, we found that the numerical scheme reduces to first-order. This can further be explained by the fact that these discrete fluxes only take into account the homogeneous solution to the associated boundary value problem from the Poisson equation, which was insufficient. The main novelty of this paper comes from the use of a piecewise linear approximation of the velocity field, which results in homogeneous and inhomogeneous fluxes that now take into account the complete solution to the associated boundary value problem from the Poisson equation. Another novelty introduced in this paper is the upwind-adjusted P\'eclet number, which ensures that the discrete homogeneous and inhomogeneous fluxes resulting from the piecewise linear approximations of the velocity field are adjusted in the proper direction, resulting in the second-order accuracy of the complete flux scheme for nonconstant velocity fields. Although the numerical scheme and tests were performed under the assumption that the diffusion parameter $D$ is a scalar, the scheme may be extended to cover anisotropic diffusion by following the ideas in \cite{CT20-complete_flux}. Of particular interest in porous media and plasma physics applications are very steep velocity fields which satisfy $\nabla \bigcdot \V = 0$ almost everywhere, except possibly for only a very small region of the domain. Test case 2 in Section \ref{sec:NumTests} shows that the novel complete flux scheme developed in this paper is able to handle similar types of velocity fields, whilst maintaining second-order accuracy of the scheme. 

A natural extension for this work would be to fully integrate the complete flux scheme to time-dependent advection-diffusion equations, which are  encountered in mathematical models for porous media and plasma physics applications. Another avenue for future research would be to extend the complete flux scheme for equations which involve nonlinear advection, which allows for a wider class of applications, such as the Navier-Stokes equations. 
\section{Acknowledgements}
The authors would like to thank Prof. Martijn Anthonissen for his comments, which improved the overall presentation of this work. The authors would also like to thank the referees for their careful reading of the manuscript and for their feedback, which helped improve the quality of this work. 
	\bibliographystyle{abbrv}
\bibliography{CF_vel}

\begin{thebibliography}{10}

\bibitem{BG18-truly2D}
R.~Bianchini and L.~Gosse.
\newblock A truly two-dimensional discretization of drift-diffusion equations
  on {C}artesian grids.
\newblock {\em SIAM Journal on Numerical Analysis}, 56(5):2845--2870, 2018.

\bibitem{BMP89-2Dexponential}
F.~Brezzi, L.~D. Marini, and P.~Pietra.
\newblock Two-dimensional exponential fitting and applications to
  drift-diffusion models.
\newblock {\em SIAM Journal on Numerical Analysis}, 26(6):1342--1355, 1989.

\bibitem{CT20-complete_flux}
H.~M. {Cheng} and J.~{ten Thije Boonkkamp}.
\newblock {A Generalised Complete Flux scheme for anisotropic
  advection-diffusion equations}.
\newblock {\em Advances in Computational Mathematics}, 47(2), 2021.

\bibitem{CS17-incompressible-2phase-flow}
D.~A. Cogswell and M.~L. Szulczewski.
\newblock Simulation of incompressible two-phase flow in porous media with
  large timesteps.
\newblock {\em Journal of Computational Physics}, 345:856--865, 2017.

\bibitem{C05-fluid_ion}
C.~Costin, L.~Marques, G.~Popa, and G.~Gousset.
\newblock Two-dimensional fluid approach to the dc magnetron discharge.
\newblock {\em Plasma Sources Science and Technology}, 14(1):168--176, 2005.

\bibitem{E83-Mathematics-Reservoir}
R.~Ewing.
\newblock {\em The Mathematics of Reservoir Simulation}.
\newblock Society for Industrial and Applied Mathematics, 1983.

\bibitem{FL17-FVCF-unstructured-grids}
P.~Farrell and A.~Linke.
\newblock Uniform second order convergence of a complete flux scheme on
  unstructured 1d grids for a singularly perturbed advection--diffusion
  equation and some multidimensional extensions.
\newblock {\em Journal of Scientific Computing}, 72(1):373--395, 2017.

\bibitem{H07-model}
G.~J.~M. Hagelaar.
\newblock Modelling electron transport in magnetized low-temperature discharge
  plasmas.
\newblock {\em Plasma Sources Science and Technology}, 16(1):S57--S66, 2007.

\bibitem{Hetal02-thruster}
G.~J.~M. Hagelaar, J.~Bareilles, L.~Garrigues, and J.~P. Boeuf.
\newblock Two-dimensional model of a stationary plasma thruster.
\newblock {\em Journal of Applied Physics}, 91(9):5592--5598, 2002.

\bibitem{Ili69-Ilin}
A.~Il'in.
\newblock Differencing scheme for a differential equation with a small
  parameter affecting the highest derivative.
\newblock {\em Mat. Zametki}, 6:237--248, 1969.

\bibitem{IHMMA16-advanced-petroleum-reservoir}
M.~R. Islam, M.~E. Hossain, S.~H. Moussavizadegan, S.~Mustafiz, and J.~H.
  Abou-Kassem.
\newblock {\em Advanced Petroleum Reservoir Simulation - Towards Developing
  Reservoir Emulators (2nd Edition)}.
\newblock John Wiley \& Sons, 2016.

\bibitem{K03-fluid_ion}
M.~J. Kushner.
\newblock Modeling of magnetically enhanced capacitively coupled plasma
  sources: Ar discharges.
\newblock {\em Journal of Applied Physics}, 94(3):1436--1447, 2003.

\bibitem{P77-Reservoir-Simulation}
D.~W. Peaceman.
\newblock {\em Fundamentals of Numerical Reservoir Simulation}.
\newblock Elsevier, 1977.

\bibitem{P94-model}
R.~K. Porteous, H.~M. Wu, and D.~B. Graves.
\newblock A two-dimensional, axisymmetric model of a magnetized glow discharge
  plasma.
\newblock {\em Plasma Sources Science and Technology}, 3(1):25--39, 1994.

\bibitem{R94-Ilin}
H.-G. Roos.
\newblock Ten ways to generate the {I}l'in and related schemes.
\newblock {\em Journal of Computational and Applied Mathematics}, 53(1):43--59,
  1994.

\bibitem{RST08-robust_num}
H.-G. Roos, M.~Stynes, and L.~Tobiska.
\newblock {\em Robust {N}umerical {M}ethods for {S}ingularly {P}erturbed
  {D}ifferential {E}quations {C}onvection-{D}iffusion-{R}eaction and {F}low
  {P}roblems. Second Edition}, volume~24 of {\em Springer Series in
  Computational Mathematics}.
\newblock Springer, Berlin, 2008.

\bibitem{SS99-FE_splines}
R.~Sacco and M.~Stynes.
\newblock Finite element methods for convection-diffusion problems using
  exponential splines on triangles.
\newblock {\em Computers \& Mathematics with Applications}, 35(3):35--45, 1998.

\bibitem{SG69}
D.~L. {Scharfetter} and H.~K. {Gummel}.
\newblock Large-signal analysis of a silicon read diode oscillator.
\newblock {\em IEEE Transactions on Electron Devices}, 16(1):64--77, 1969.

\bibitem{AB11-FVCF}
J.~H.~M. ten Thije Boonkkamp and M.~J.~H. Anthonissen.
\newblock The finite volume-complete flux scheme
  for advection-diffusion-reaction equations.
\newblock {\em Journal of Scientific Computing}, 46(1):47--70, 2011.

\end{thebibliography}
\end{document}